\providecommand{\U}[1]{\protect\rule{.1in}{.1in}}
\def\U{{\cal U }}
\def\b0{{\overline{0}}}
\newtheorem{theorem}{Theorem}[section]
\newtheorem{corollary}[theorem]{Corollary}
\newtheorem{definition}[theorem]{Definition}
\newtheorem{lemma}[theorem]{Lemma}
\newtheorem{proposition}[theorem]{Proposition}
\newtheorem{remark}[theorem]{Remark}
\newenvironment{proof}[1][Proof]{\noindent\textbf{#1.} }{\ \rule{0.5em}{0.5em}}
\begin{document}

\title{Mean sensitive, mean equicontinuous and almost periodic functions for
dynamical systems}
\author{Felipe~Garc\'ia-Ramos
\and Brian Marcus}
\date{}
\maketitle

\begin{abstract}
We show that a continuous abelian action (in particular $\mathbb{R}^{d}$) on a
compact metric space equipped with an invariant ergodic measure has discrete
spectrum if and only it is $\mu-$mean equicontinuous (proven for
$\mathbb{Z}^{d}$ in \cite{weakeq}). In order to do this we introduce mean
equicontinuity and mean sensitivity with respect to a function. We study this
notion in the topological and measure theoretic setting. In the measure
theoretic case we characterize almost periodic functions with these concepts
and in the topological case we show that weakly almost periodic functions are
mean equicontinuous (the converse does not hold). We compare our results with
some results in the theory of Delone dynamical systems and quasicrystals.

\end{abstract}
\tableofcontents

\bigskip

A $\mathbb{Z-}$ topological dynamical system ($\mathbb{Z-}$TDS) is a pair
$(X,T)$ where $X$ is a compact metric space (with metric $d$) and
$T:X\rightarrow X$ a continuous invertible function (we will deal with abelian
$\mathbb{G}$ actions, but for the introduction using $\mathbb{Z}$ will be
enough to get intuition on the definitions). In the theory of topological
dynamical systems several notions of order have been studied. The most ordered
systems are the periodic systems, i.e. when for every $x\in X$ there exists
$n_{x}>0$ such that $T^{n_{x}}x=x;$ followed by the equicontinuous systems
i.e. when for every $\varepsilon>0$ there exists $\delta>0$ such that if
$d(x,y)\leq\delta$ then $d(T^{i}x,T^{i}y)\leq\varepsilon$ for every $i\geq0$.
This notion means that the system if highly predictable in the following
sense: if you only know $x$ is inside a small $\delta-$neighbourhood you will
be able to predict with $\varepsilon-$precision the orbit of $x.$ Several
weaker notions like distality, nullness, tameness, mean equicontinuity, among
others have been studied. In particular we are interested in mean
equicontinuity. A system is mean equicontinuous , if for every $\varepsilon>0$
there exists $\delta>0$ such that if $d(x,y)\leq\delta$ then $\lim\sup\frac
{1}{n}%
{\textstyle\sum\nolimits_{i=1}^{n}}
d(T^{i}x,T^{i}y)\leq\varepsilon$ . Here we have predictability not for every
$i$ but on sets of high density (see Proposition \ref{eq_metrics}). According
to Asulander \cite{auslander1959mean}, Fomin introduced these systems in
\cite{fominoriginal}.

Mean equicontinuity has received interest in recent years due to connections
with ergodic properties of measurable dynamical systems, i.e. dynamical
systems equipped with an invariant probability measure. In particular, it has
been shown that using a measure theoretic version of mean equicontinuity one
can characterize when a measure-preserving system has discrete spectrum
\cite{weakeq} (i.e. when the eigenfunctions of the Koopman operator generate
$L^{2}$) and when the maximal equicontinuous factor is actually an isomorphism
(i.e. when the continuous eigenfunctions of the Koopman operator generate
$L^{2}$) \cite{li2013mean, downarowicz2016isomorphic} . The proof of the
characterization of discrete spectrum using mean equicontinuity in
\cite{weakeq} does not hold for continuous actions (e.g. $\mathbb{R}^{d}$). In
this paper we prove that the characterization of discrete spectrum also holds
for abelian $\mathbb{G}$-actions (Theorem \ref{meds}). To do this we introduce
mean equicontinuity with respect to a function. We say that $(X,T)$ is mean
equicontinuous with respect to $f:X\rightarrow\mathbb{C}$ if for every
$\varepsilon>0$ there exists $\delta>0$ such that if $d(x,y)\leq\delta$ then
$\lim\sup\frac{1}{n}%
{\textstyle\sum\nolimits_{i=1}^{n}}
\left\vert f(T^{i}x)-f(T^{i}y)\right\vert \leq\varepsilon.$ Actually we will
see that a system is mean equicontinuous if and only if it is mean
equicontinuous with respect to every continuous function. Using the measure
theoretic version of this concept we can characterize almost periodic
functions (Corollary \ref{corhap}). Almost periodic functions are classical
objects in ergodic theory that are used to characterize discrete spectrum and
weak mixing.

\bigskip Another important concept that appears in the paper is sensitivity. A
topological dynamical system is \textit{sensitive} if there exists
$\varepsilon>0$ such that for every $x,y\in X$ there exists $i$ such that
$d(T^{i}x,T^{i}y)\geq\varepsilon$ and \textit{mean sensitive} if there exists
$\varepsilon>0$ such that for every $x,y\in X$ we have $\lim\sup\frac{1}{n}%
{\textstyle\sum\nolimits_{i=1}^{n}}
d(T^{i}x,T^{i}y)\geq\varepsilon.$

We study similar concepts in three different categories of dynamical systems
(measurable, topological, and measurable and topological) which divide
sections of the paper. The paper is arranged as follows.

In Section \ref{measure} we consider measure preserving systems (MPS) without
any topology. The main objective of this section is to characterize almost
periodic functions. To do this we introduce a purely measure theoretic notion
(no topology) of mean sensitivity with respect to an $L^{2}$ function $f$
($\mu-f-$mean sensitivity see Definition \ref{meansensitive}). We then
characterize this concept as the negation of almost periodicity of $f$ (when
the action is Abelian). As a corollary, we characterize systems with discrete
spectrum as those ergodic systems that are not mean sensitive with respect to
all $L^{2}$ functions; and we characterize weakly mixing systems as those
ergodic systems which are mean sensitive with respect to all non-constant
$L^{2}$ functions. Note that while it is possible to develop a purely measure
theoric version of mean sensitivity, we did not find a natural purely measure
theoretic notion of mean equicontinuity (other than negation of mean sensitivity).

In Section \ref{topological} we consider topological dynamical systems (TDS)
(without measures). We define the purely topological versions of mean
equicontinuity and mean sensitivity of a TDS relative to a given continuous
function $f$. We show that in the case of a minimal TDS, mean sensitivity and
mean equicontinuity (relative to $f$) are complementary notions and partition
$C(X)$. Finally, for a TDS we give results on the relationships between
topological almost periodicity defined by Ellis in
\cite{ellis1959equicontinuity}, and mean equicontinuity. In particular we show
that if a function is weakly almost periodic then the system is mean
equicontinuous with respect to the function (the converse does not hold). \ 

In Section \ref{hybrid} we consider topological dynamical systems equipped
with invariant probability measures. It is in this category where we can
define a hybrid (measure theoretic and topological) form of mean
equicontinuity relative to a given function $f$ and a measure $\mu$ ($\mu
-f-$mean equicontinuity). First we show that this notion is complementary to
the measure-theoretic notion of mean sensitivity relative to $f$ (defined in
Section \ref{measure}). Finally we present a characterization of discrete
spectrum for TDS equipped with ergodic measures (Theorem \ref{meds}). In
Section 4 we apply this result to characterize quasicrystalline behaviour on
Delone sets. Finally we compare our results with other characterizations of
discrete spectrum in terms of topological averages (e.g. \cite{quasicrystals}%
\cite{lenz2016autocorrelation}).

\bigskip In this paper $\mathbb{G}$ denotes an abelian locally compact group
(with operation $+$ and identity $e$). Every locally compact group has a
unique measure that is invariant under rotations, known as Haar measure on
$\mathbb{G}$. We will denote this measure with $\nu.$ In particular if
$\mathbb{G=R}^{d},$ then $\nu$ is the Lebesgue measure, and if\ $\mathbb{G}$
countable $\nu$ is the cardinality$.$

\bigskip A sequence of measurable sets $\left\{  F_{n}\right\}  _{n\in
\mathbb{N}}$ $\subset\mathbb{G}$ with finite non-zero $\nu-$measure is a
\textbf{F\o lner sequence} if for every $g\in\mathbb{G}$
\[
\lim_{n\rightarrow\infty}\frac{\nu(gF_{n}\bigtriangleup F_{n})}{\nu(F_{n}%
)}=0.
\]

\bigskip We say a group $\mathbb{G}$ is \textbf{amenable} if it contains a
F\o lner sequence. Every Abelian group is amenable. In particular if
$\mathbb{G}=\mathbb{Z}^{d}$ or $\mathbb{G}=\mathbb{R}^{d}$ then $F_{n}:=$
$\left[  -n,n\right]  ^{d}$ is a F\o lner sequence.

\begin{definition}
Let $S\subset\mathbb{G}$ and $\left\{  F_{n}\right\}  _{n\in\mathbb{N}}$ a
F\o lner sequence. We define the \textbf{lower density of S} by%
\[
\underline{D}(S):=\liminf_{n\rightarrow\infty}\frac{\nu(S\cap F_{n})}%
{\nu(F_{n})},
\]
and the \textbf{upper density of S } by
\[
\overline{D}(S):=\limsup_{n\rightarrow\infty}\frac{\nu(S\cap F_{n})}{\nu
(F_{n})}.
\]

\end{definition}


\section{Measure theoretical results}

\label{measure}

\bigskip
A $\mathbb{G}-$ \textbf{measure preserving system (}$\mathbb{G}-$\textbf{MPS)}
is a quadruple $(X,\Sigma,\mu,T)$ where $(X,\Sigma,\mu)$ is a standard
probability space and
\[
T:\mathbb{G}\times X\rightarrow X,(j,x)\rightarrow T^{j}x
\]
is a group action (that is $T^{e}x=x$ and $T^{g}T^{j}x=T^{g+j}x$ $\forall x\in
X$)$\ $and $T^{j}:X\rightarrow X$ is measure preserving on $(X,\Sigma,\mu)$
for every $j\in\mathbb{G}$. When it is not needed we will omit writing
$\Sigma.$ A $\mathbb{Z}$-measure preserving system is generated by a measure
preserving invertible transformation $T:(X,\mu)\rightarrow(X,\mu)$ on a
Lebesgue probability space $(X,\mu)$.

We say a measurable subset is invariant if it is invariant under every
$T^{i};$ we say a \textbf{$\mathbb{G-}$}MPS is an \textbf{ergodic} if every
measurable invariant set has measure 0 or 1. The collection of measurable sets
with positive $\mu$-measure is denoted $\Sigma^{+}.$

A measure preserving system $(X,\mu,T)$ generates a family of unitary linear
operators (known as the Koopman operators) on the complex Hilbert space
$L^{2}(X,\mu),$ $U_{T}^{j}:f\mapsto f\circ T^{j}.$ We denote the inner product
of $L^{2}(X,\mu)$ by $\left\langle \cdot,\cdot\right\rangle $.

\begin{definition}
\label{generic}Let $(X,\mu,T)$ be an ergodic $\mathbb{G}-$MPS and $\left\{
F_{n}\right\}  $ a F\o lner sequence.

Let $A\subset X$ be a measurable set. We say $x\in X$ is a \textbf{generic
point for }$A$\textbf{\ }if
\[
\lim_{n\rightarrow\infty}\frac{\nu(\left\{  i\in F_{n}:T^{i}x\in A\right\}
)}{\nu(F_{n})}=\mu(A).
\]

Let $f$ be a measurable function. We say $x\in X$ is a \textbf{generic point
for }$f$\textbf{\ }if
\[
\lim_{n\rightarrow\infty}\frac{1}{\nu(F_{n})}\int_{i\in F_{n}}f(T^{i}%
x)d\nu=\int_{X}fd\mu(x).
\]

\end{definition}

\bigskip Lindenstrauss proved that for every amenable group there exists a
F\o lner sequence $\left\{  F_{n}\right\}  $ that satisfies the pointwise
ergodic theorem i.e. that for every $f\in L^{1}(X,\mu)$ almost every $x\in X$
is a generic point for $f$ \cite{lindenstrauss2001pointwise}$.$

\begin{remark}
From now on we associate to every $\mathbb{G}$ a F\o lner sequence $\left\{
F_{n}\right\}  $ that satisfies the pointwise ergodic theorem.
\end{remark}


\subsection{Almost periodic and $\mu-$mean sensitive functions}

Let $S^{1}\subset\mathbb{C}$ be the unit circle. Given $\mathbb{G}$ we denote
with $\widehat{\mathbb{G}}$ the (Pontryagin) dual group, which is composed by
the continuous group homomorphisms from $\mathbb{G}$ to $S^{1}$. Given
$g\in\mathbb{G}$ and $w\in\widehat{\mathbb{G}}$, we define $\left(
w,j\right)  :=w(j).$

\begin{definition}
\label{eigen}Let $(X,\mu,T)$ be an ergodic $\mathbb{G}-$MPS and $f\in
L^{2}(X,\mu)$ (complex-valued functions).

$\cdot$We say $f$ is an \textbf{almost periodic function} if $cl\left\{
U^{j}f:j\in\mathbb{G}\right\}  $ is compact as a subset of $L^{2}(X,\mu)$. We
denote the set of almost periodic functions by $H_{ap}$.

$\cdot$We say $f$ $\neq0$ is an \textbf{eigenfunction} of $(X,\mu,T)$ if there
exists $w\mathcal{\in}\widehat{\mathbb{G}}$ such that
\[
U^{j}(f)=(w,j)f\text{ \ }\forall j\in\mathbb{G}.
\]

\end{definition}

\begin{remark}
When $d=1,$ the action reduces to the action of a single measure preserving
transformation $T$ on $(X,\mu)$ and the definition of eigenfunction reduces to
the usual definition of an eigenfunction for a unitary operator: $f$ $\neq0$
and there exists $\lambda\mathcal{\in}\mathbb{C}$ such that $\left\vert
\lambda\right\vert =1$ and
\[
f\circ T=\lambda f.
\]

\end{remark}

\begin{definition}
A subset $S\subset\mathbb{G}$ is \textbf{syndetic }if there exists a bounded
set $K\subset\mathbb{G}$ such that $S+K=\mathbb{G}$.
\end{definition}

The following results are well known (see for example
\cite{furstenberg2014recurrence}\cite{furstenberg1982ergodic}).

\begin{proposition}
\label{prop}\bigskip Let $(X,\mu,T)$ be a $\mathbb{G-}$MPS.

\begin{enumerate}
\item $H_{ap}$ is the closure of the set spanned by the eigenfunctions of $T.$

\item $f\in L^{2}(X,\mu)$ is almost periodic if and only if for every
$\varepsilon>0$ there exists a syndetic set $S\subset\mathbb{G}$ such that
$\int\left\vert f-U^{j}f\right\vert ^{2}d\mu\leq\varepsilon$ for every $j\in
S$.

\item The product of two almost periodic functions is almost periodic.

\item If $(X,\mu,T)$ is ergodic and $f$ is an eigenfunction then $\left\vert
f\right\vert $ is constant almost everywhere.
\end{enumerate}
\end{proposition}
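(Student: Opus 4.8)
The plan is to prove the four standard facts about almost periodic functions $H_{ap}$ by exploiting the spectral structure of the Koopman representation. Since the results are stated as "well known," I would treat this as a guided recollection of the classical arguments, being careful to handle the $\mathbb{G} = \mathbb{Z}^d$ or $\mathbb{R}^d$ cases uniformly.

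For part (1), I would first observe that each eigenfunction $f$ satisfies $U^j f = e^{2\pi i \langle w, j\rangle} f$, so its orbit closure $cl\{U^j f : j \in \mathbb{G}\}$ lies in the circle $\{\lambda f : |\lambda| = 1\}$, which is compact; hence eigenfunctions are almost periodic, and since $H_{ap}$ is easily checked to be a closed linear subspace, the closed span of the eigenfunctions is contained in $H_{ap}$. The reverse inclusion is the substantive direction: given $f \in H_{ap}$, I would decompose $f = f_1 + f_2$ where $f_1$ lies in the closed span $\mathcal{H}_d$ of the eigenfunctions and $f_2 \perp \mathcal{H}_d$, and argue that $f_2 = 0$. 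The idea is that $\mathcal{H}_d$ is $U^j$-invariant, so its orthogonal complement is too, and on that complement the representation has continuous (non-atomic) spectrum; an orbit with continuous spectral measure cannot be precompact unless the function is zero. I expect this to be the main obstacle, and I would make it rigorous via the spectral theorem: the function $j \mapsto \langle U^j f_2, f_2\rangle$ is the Fourier transform of the spectral measure of $f_2$, which is non-atomic, so by Wiener's lemma its averages of $|\langle U^j f_2, f_2\rangle|^2$ tend to zero, forcing the orbit $\{U^j f_2\}$ to spread out and contradicting precompactness unless $\|f_2\| = 0$.

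For part (2), precompactness of the orbit is equivalent to total boundedness, which is what gives the syndetic return times: given $\varepsilon$, cover $cl\{U^j f\}$ by finitely many $\varepsilon$-balls, and a pigeonhole argument on which ball each $U^j f$ falls into produces, for each $j$, some bounded $K$ with a return close to $f$ within $K + j$, yielding syndeticity of $\{j : \|f - U^j f\|^2 \le \varepsilon\}$. Conversely, if such syndetic sets exist for all $\varepsilon$, then using that $\{U^j f : j \in K\}$ is already precompact for bounded $K$ (by continuity of the action on $L^2$ in the $\mathbb{R}^d$ case, or finiteness in the $\mathbb{Z}^d$ case) together with $S + K = \mathbb{G}$, one covers the whole orbit by finitely many small balls, giving total boundedness.

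For part (3), I would use part (1): writing two almost periodic functions as $L^2$-limits of finite linear combinations of eigenfunctions, it suffices to check that a product of two eigenfunctions (with eigenvalues $w_1, w_2$) is again an eigenfunction (with eigenvalue $w_1 + w_2$), which is immediate from multiplicativity of $U^j$; one must separately verify that the products land in $L^2$ and that the limiting operation is legitimate, which in the ergodic case follows because $|f|$ is bounded (see part (4)). Part (4) is the cleanest: if $f$ is an eigenfunction, then $U^j |f| = |U^j f| = |e^{2\pi i\langle w,j\rangle} f| = |f|$, so $|f|$ is an invariant function, and ergodicity forces it to be constant almost everywhere. I would present (4) first since (3) relies on the boundedness it provides.
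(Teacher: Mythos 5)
The paper offers no proof of this proposition: it is cited as well known (Furstenberg), so there is no in-paper argument to compare yours against, and your outline follows the standard route. Parts (2) and (4) are correct as sketched. Two places need repair. First, in part (1) the step ``an orbit with continuous spectral measure cannot be precompact unless the function is zero'' is the crux and is left at the level of a slogan (``forcing the orbit to spread out''). A clean way to finish: the closed span $\mathcal{H}_d$ of the eigenfunctions is invariant, so the orthogonal projection onto it commutes with every $U^{j}$ and hence $f_{2}$ is itself almost periodic; by part (2) the set $S=\{j:\|f_{2}-U^{j}f_{2}\|^{2}\leq\epsilon\}$ is syndetic, hence has positive lower density along $F_{n}=[0,n]^{d}$, and for $j\in S$ one has $\mathrm{Re}\,\langle U^{j}f_{2},f_{2}\rangle\geq\|f_{2}\|^{2}-\epsilon/2$; this yields a positive lower bound on the averages of $|\langle U^{j}f_{2},f_{2}\rangle|^{2}$ unless $\|f_{2}\|^{2}\leq\epsilon/2$, contradicting Wiener's lemma. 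Without some such quantitative link between precompactness and the vanishing of the Wiener averages, the contradiction is not actually established.

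Second, part (3) as you resolve it has a genuine gap. You reduce to eigenfunctions and pass to limits, asserting that integrability and the limit interchange are fine ``because $|f|$ is bounded (see part (4)).'' But part (4) only gives constant modulus for \emph{eigenfunctions}, and only under ergodicity, which part (3) does not assume; a general almost periodic function need not be bounded at all (in a discrete spectrum system every $L^{2}$ function is almost periodic). Moreover, even granting $g_{n}\rightarrow g$ in $L^{2}$ with $g_{n}$ finite combinations of eigenfunctions, concluding $fg_{n}\rightarrow fg$ in $L^{2}$ requires $f\in L^{\infty}$. The statement is really about bounded almost periodic functions (which is how the paper uses it, e.g.\ for indicator functions), and for bounded $f,g$ there is a cleaner argument that bypasses part (1) entirely: every element of $cl\{U^{j}f\}$ obeys the same a.e.\ bound $\|f\|_{\infty}$, multiplication is jointly continuous from $L^{2}\times L^{2}$ to $L^{2}$ on $L^{\infty}$-bounded sets, and $cl\{U^{j}(fg)\}$ lies in the image of the compact set $cl\{U^{j}f\}\times cl\{U^{j}g\}$ under this map, hence is compact. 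You should either add the boundedness hypothesis or supply the truncation argument needed to handle the general case.
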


\begin{definition}
\bigskip Let $(X,\mu,T)$ and $(X^{\prime},\mu^{\prime},T^{\prime})$ be two
$\mathbb{G-}$MPS. We say they are\textbf{\ isomorphic} if there exists an a.e.
bijective and measure preserving function $f:(X,\mu)\rightarrow(X^{\prime}%
,\mu^{\prime})$ such that $T^{\prime}\circ f=f\circ T$ and $f^{-1}$ is measure preserving.
\end{definition}

\begin{definition}
\label{ds} Let $(X,\mu,T)$ be an ergodic $\mathbb{G-}$MPS.

$\cdot(X,\mu,T)$ has \textbf{discrete spectrum} if there exists an orthonormal
basis for $L^{2}(X,\mu)$ which consists of eigenfunctions of $(X,\mu,T).$

$\cdot(X,\mu,T)$ has \textbf{continuous spectrum} if the only eigenfunctions
are the constant functions.

$\cdot(X,\mu,T)$ is a \textbf{measurable isometry }if it is isomorphic to
$(X^{\prime},\mu^{\prime},T^{\prime})$ where $(X^{\prime},T^{\prime})$ is
where $X^{\prime}$ is a compact metric space and each element of $T^{\prime}$
is an isometry.

$\cdot(X,\mu,T)$ is \textbf{weakly mixing} if and only if of $(X\times
X,\mu\times\mu,T\times T)$ is an ergodic $\mathbb{G-}$MPS.
\end{definition}

The following two results are due to Halmos and Von-Neumann (see for example
\cite{walters2000introduction}:\ note that the original proof is stated for
$\mathbb{G=Z},$ nonetheless the same proof is valid for abelian group actions).

\begin{theorem}
\label{discrete}Let $(X,\mu,T)$ be an ergodic system. The following are equivalent:

$\cdot(X,\mu,T)$ has discrete spectrum.

$\cdot(X,\mu,T)$ is a measurable isometry.

$\cdot L^{2}(X,\mu)=H_{ap}$
\end{theorem}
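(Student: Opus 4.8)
The plan is to prove the theorem as a cycle of three implications,
\[
\text{(measurable isometry)}\Rightarrow L^{2}(X,\mu)=H_{ap}\Rightarrow\text{(discrete spectrum)}\Rightarrow\text{(measurable isometry)},
\]
each direction handled once. The first two implications are soft and rest on Proposition \ref{prop} together with ergodicity; the real content is the last one, where a compact-group model must be built out of purely spectral data. I would remark at the outset that the whole argument is insensitive to whether $\mathbb{G}=\mathbb{Z}^{d}$ or $\mathbb{R}^{d}$, since only the group and dual-group structure is used.

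For the first implication I would assume, without loss of generality, that $T$ acts by isometries of a compact metric space $X$. For a continuous $f$ the family $\{U^{j}f:j\in\mathbb{G}\}$ is uniformly bounded and, because every $T^{j}$ is an isometry, equicontinuous (composition with isometries preserves the modulus of continuity). By Arzel\`a--Ascoli this family is precompact in $C(X)$, hence precompact in $L^{2}(X,\mu)$, so every continuous function is almost periodic. Since $H_{ap}$ is a closed subspace of $L^{2}$ (Proposition \ref{prop}(1)) and $C(X)$ is dense, this forces $L^{2}(X,\mu)=H_{ap}$.

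The second implication upgrades mere density of the eigenfunctions (Proposition \ref{prop}(1)) to an orthonormal basis. Two standard facts do the work. First, eigenfunctions for distinct eigenvalues are orthogonal: if $U^{j}f=e^{2\pi i\langle w,j\rangle}f$ and $U^{j}g=e^{2\pi i\langle w',j\rangle}g$, then invariance of the inner product gives $\langle f,g\rangle=e^{2\pi i\langle w-w',j\rangle}\langle f,g\rangle$ for all $j$, so $\langle f,g\rangle=0$ whenever $w\neq w'$. Second, ergodicity makes each eigenspace one-dimensional: if $f$ and $g$ share an eigenvalue, then by Proposition \ref{prop}(4) $|g|$ is a nonzero constant a.e., so $f/g$ is $T$-invariant and hence constant a.e. Choosing one unit eigenfunction per eigenvalue therefore yields an orthonormal set whose span equals the (dense) span of all eigenfunctions, which is the desired orthonormal basis of eigenfunctions.

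The last implication is where I expect the difficulty. Let $\Lambda\subset\widehat{\mathbb{G}}$ be the set of eigenvalues; it is a countable subgroup because products and complex conjugates of eigenfunctions are again eigenfunctions (Proposition \ref{prop}(3)). Using Proposition \ref{prop}(4) I would normalize, for each $w\in\Lambda$, a unit-modulus eigenfunction $f_{w}$, and then arrange a \emph{multiplicative} choice, $f_{0}\equiv 1$ and $f_{w}f_{w'}=f_{w+w'}$ a.e. This consistent normalization is the first delicate point: eigenfunctions are determined only up to a unimodular scalar, so one is really selecting a section of a group extension, and a measurable-selection (cocycle) argument is needed to remove the scalar ambiguity. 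Granting it, the map $\Phi\colon X\to K:=\widehat{\Lambda}$ sending $x$ to the character $w\mapsto f_{w}(x)$ is well defined, intertwines $T^{j}$ with translation by the character $w\mapsto e^{2\pi i\langle w,j\rangle}$, and translations of the compact abelian group $K$ are isometries for a translation-invariant metric. It then remains to verify that $\Phi$ is a measure isomorphism onto $(K,\text{Haar})$: injectivity modulo null sets because the $f_{w}$ form an orthonormal basis and hence separate points up to measure zero, and $\Phi_{*}\mu=\text{Haar}$ because ergodicity forces a translation-invariant measure with compact support to be normalized Haar measure. These two measure-theoretic verifications, together with the multiplicative normalization, are the main obstacle; everything else is formal.
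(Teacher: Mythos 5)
The paper does not prove this theorem at all: it is the Halmos--von Neumann theorem, and the authors simply cite Walters (noting that the $\mathbb{Z}$-proof carries over to $\mathbb{Z}^{d}$ and $\mathbb{R}^{d}$). So the only meaningful comparison is with that classical argument, and your cycle of implications is exactly the standard one: Arzel\`a--Ascoli for (isometry) $\Rightarrow L^{2}=H_{ap}$, orthogonality of distinct eigenspaces plus one-dimensionality under ergodicity for $L^{2}=H_{ap}\Rightarrow$ (discrete spectrum), and the dual-group model $K=\widehat{\Lambda}$ for the hard direction. The first two implications as you write them are complete and correct (for the first, one should just say explicitly that almost periodicity of $U^{j}f$ is preserved under measure isomorphism, since the Koopman representations are unitarily equivalent).

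The one step you leave as ``granting it'' --- the multiplicative normalization $f_{w}f_{w'}=f_{w+w'}$ --- is indeed the crux, but the standard resolution is algebraic rather than a measurable-selection or cocycle argument. Let $\mathcal{E}$ be the multiplicative group of unimodular eigenfunctions (mod null sets); by ergodicity and Proposition \ref{prop}(4) one gets a short exact sequence $1\to\mathbb{T}\to\mathcal{E}\to\Lambda\to1$, where $\mathbb{T}$ is the group of unimodular constants. Since $\mathbb{T}$ is divisible, hence injective as an abelian group, this sequence splits, and a splitting is precisely a homomorphic choice $w\mapsto f_{w}$; measurability is free because the elements of $\mathcal{E}$ are already measurable functions, and since $\Lambda$ is countable (separability of $L^{2}$) one can fix pointwise representatives so that all the identities hold off a single null set. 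With that in hand your remaining verifications go through as sketched: the characters $j\mapsto(w\mapsto e^{2\pi i\langle w,j\rangle})$ separate points of $\Lambda$, so by Pontryagin duality they generate a dense subgroup of $K$ and $\Phi_{*}\mu$ must be Haar; and the $f_{w}$ generate the full $\sigma$-algebra mod $\mu$ because they span a dense subspace of $L^{2}$, which gives a.e.\ injectivity on a standard probability space. So the proposal is a correct outline of the cited proof, with the splitting step needing the divisibility argument above rather than the selection argument you gesture at.
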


\begin{theorem}
\label{weak}Let $(X,\mu,T)$ be an ergodic system. The following are equivalent:

$\cdot(X,\mu,T)$ has continuous spectrum.

$\cdot(X,\mu,T)$ is weakly mixing.

$\cdot H_{ap}$ consists only of constant functions.
\end{theorem}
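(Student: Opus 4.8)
The plan is to establish the three-way equivalence by first disposing of the equivalence of the first and third bullets, which is essentially formal, and then concentrating on the genuine content, the equivalence of weak mixing with the absence of nonconstant eigenfunctions. By Proposition \ref{prop}(1), $H_{ap}$ is the closed span of the eigenfunctions of $T$, and the constants are always eigenfunctions (take $w=0$), so $H_{ap}$ always contains the constants. If $(X,\mu,T)$ has continuous spectrum, then the only eigenfunctions are constant, so the span of the eigenfunctions is the one-dimensional space of constants, which is already closed; hence $H_{ap}$ consists only of constants. Conversely, every eigenfunction lies in $H_{ap}$, so if $H_{ap}$ contains only constants then the only eigenfunctions are constants, i.e.\ the system has continuous spectrum. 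This settles the equivalence of the first and third bullets using nothing beyond Proposition \ref{prop}(1).

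For the direction weakly mixing $\Rightarrow$ continuous spectrum I would argue by contradiction. Suppose $f$ is a nonconstant eigenfunction with $U^{j}f=e^{2\pi i\langle w,j\rangle}f$, and set $F(x,y)=f(x)\overline{f(y)}$ on $(X\times X,\mu\times\mu)$. A direct computation gives $U_{T\times T}^{j}F=e^{2\pi i\langle w,j\rangle}e^{-2\pi i\langle w,j\rangle}F=F$, so $F$ is $T\times T$-invariant. By Proposition \ref{prop}(4), ergodicity forces $|f|$ to be a nonzero constant a.e., and $F$ is then not a.e.\ constant: were $F(x,y)=k$ a.e., integrating in $y$ would give $f(x)\overline{\int f\,d\mu}=k$, making $f$ constant if $\int f\,d\mu\neq 0$, or forcing $k=0$ and hence $|f|=0$ if $\int f\,d\mu=0$, both impossible. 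Thus $F$ is a nonconstant $T\times T$-invariant function, contradicting ergodicity of $T\times T$. Hence weak mixing rules out nonconstant eigenfunctions.

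The substantive direction is continuous spectrum $\Rightarrow$ weakly mixing, and here I would use the spectral theorem together with a Wiener-type lemma. For $f\in L^{2}$ with $\int f\,d\mu=0$, represent the correlations via the spectral measure $\sigma_{f}$ on the dual group, $\langle U^{j}f,f\rangle=\int e^{2\pi i\langle w,j\rangle}\,d\sigma_{f}(w)$; the atoms of $\sigma_{f}$ correspond exactly to eigenvalues, so continuous spectrum makes $\sigma_{f}$ non-atomic. Wiener's lemma then yields $\frac{1}{\nu(F_{n})}\int_{F_{n}}|\langle U^{j}f,f\rangle|^{2}\,d\nu\to\sum_{w}|\sigma_{f}(\{w\})|^{2}=0$. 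To convert this into ergodicity of $T\times T$, I would verify the mean-ergodic criterion on the dense family of product functions $f_{1}\otimes f_{2}$: after splitting each factor into its mean and a mean-zero part, the constant parts reproduce exactly the limit $(\int f_{1}\,d\mu)(\int f_{2}\,d\mu)$, while every remaining term involves a mean-zero factor. The displayed decay extends from the diagonal correlations $\langle U^{j}f,f\rangle$ to general $\langle U^{j}f,h\rangle$ by polarization, and the Cesàro averages of the relevant products are then controlled by Cauchy--Schwarz, so all remaining terms vanish; this is the standard Koopman--von Neumann packaging.

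The main obstacle is technical rather than conceptual. One must install the correct $\mathbb{Z}^{d}$- and $\mathbb{R}^{d}$-versions of Wiener's lemma and of the Koopman--von Neumann density decomposition relative to the Følner averages $F_{n}=[0,n]^{d}$, and confirm that the spectral calculus over the dual groups behaves as in the classical one-dimensional setting (in particular that the mean ergodic theorem and the identification of atoms with eigenvalues persist). Once these ingredients are in place, the argument proceeds exactly as above, and combined with the first paragraph the full equivalence follows.
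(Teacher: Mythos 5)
The paper does not actually prove this theorem: it is quoted as the classical Halmos--von Neumann / Koopman--von Neumann characterization, with the remark that the $\mathbb{Z}$-action proof in Walters carries over to $\mathbb{Z}^{d}$ and $\mathbb{R}^{d}$. Your argument is exactly that classical proof --- the formal identification of the first and third bullets via Proposition \ref{prop}(1), the product-eigenfunction $F(x,y)=f(x)\overline{f(y)}$ for weak mixing $\Rightarrow$ continuous spectrum, and spectral measures plus Wiener's lemma and the Koopman--von Neumann splitting for the converse --- and it is sound; the ingredients you flag as needing verification (Wiener's lemma and the atom--eigenvalue correspondence relative to the F\o lner averages over $F_{n}=[0,n]^{d}$) are indeed standard for these groups.
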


Let $\mathcal{F}$ denote the set of all functions $\alpha:{\mathbb{G}%
}\rightarrow{\mathbb{C}}$ such that
\[
\lim\sup\left(  \frac{1}{\nu(F_{n})}\int_{F_{n}}\left\vert \alpha
(j)\right\vert ^{2}d\nu(j)\right)  <\infty.
\]
We define the following pseudometric on $\mathcal{F}$:
\[
e(\alpha,\beta):=\lim\sup\left(  \frac{1}{\nu(F_{n})}\int_{F_{n}}\left\vert
\alpha(j)-\beta(j)\right\vert ^{2}d\nu(j)\right)  ^{1/2}%
\]
It is not difficult to show that this is indeed a pseudometric (i.e. satisfies
symmetry and the triangle inequality).

For any $f\in L^{2}(X,\mu)$ we introduce the following pseudometric on a set
of full measure in $X$.

\begin{definition}
\label{L2metric} Let $f\in L^{2}(X,\mu).$ We define
\[
d_{f}(x,y):=\lim\sup\left(  \frac{1}{\nu(F_{n})}\int_{F_{n}}\left\vert
f(T^{j}x)-f(T^{j}y)\right\vert ^{2}d\nu(j)\right)  ^{1/2}.
\]

\end{definition}

Using Cauchy-Schwartz and the pointwise ergodic theorem on can check that
$d_{f}(x,y)$ is finite-valued for all $x,y\in X$ that are generic points for
$f^{2}$. Letting $\alpha(j)=f(T^{j}x),\beta(j)=f(T^{j}y)$, we see that
$d_{f}(x,y)=e(\alpha,\beta)$ and thus it is a pseudo-metric on the set of
generic points for $f^{2}$. Whenever we use the pseudometric $d_{f}$ we only
consider the points that are generic for $f^{2}.$

\begin{definition}
\label{meansensitive} Let $f\in L^{2}(X,\mu).$ We say an $\mathbb{G-}$MPS
$(X,\mu,T)$ is $\mu-f-$\textbf{mean sensitive} if there exists $\varepsilon>0$
such that for every $A\in\Sigma^{+}$ there exist $x,y\in A$ such that
\begin{equation}
d_{f}(x,y)>\varepsilon. \label{dfe}%
\end{equation}
In this case we also say that $f$ is $\mu-$\textbf{mean sensitive} and
$\varepsilon$ is a $\mu$-\textbf{sensitivity constant} for $f$. We denote the
set of $\mu-$mean sensitive functions by $H_{ms}$.
\end{definition}

\begin{remark}
We will see in Section~\ref{equivalent_metrics} that for $f \in L^{\infty
}(X,\mu)$, there are equivalent pseudometrics that give different perspectives
on mean sensitivity.
\end{remark}




\begin{lemma}
\label{open}Let $(X,\mu,T)$ be an ergodic $\mathbb{G-}$MPS. Then
$H_{ms}\subset L^{2}(X,\mu)$ is an open set.
\end{lemma}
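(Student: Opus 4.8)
The plan is to prove openness directly: given $f\in H_{ms}$ with $\mu$-sensitivity constant $\varepsilon$, I would exhibit a radius $\delta>0$ so that every $g\in L^{2}(X,\mu)$ with $\|f-g\|_{L^{2}(\mu)}<\delta$ is again $\mu$-mean sensitive. The natural choice is $\delta=\varepsilon/4$, and the target is to show that any such $g$ is $\mu$-mean sensitive with constant $\varepsilon/2$.

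The key estimate compares $d_{f}$ and $d_{g}$ through the triangle inequality for the pseudometric $e$ of Definition~\ref{L2metric}. Writing $\alpha(j)=f(T^{j}x)$, $\alpha'(j)=g(T^{j}x)$, $\beta(j)=f(T^{j}y)$, $\beta'(j)=g(T^{j}y)$, we have $d_{f}(x,y)=e(\alpha,\beta)$ and $d_{g}(x,y)=e(\alpha',\beta')$, so
\[
d_{f}(x,y)\leq e(\alpha,\alpha')+d_{g}(x,y)+e(\beta',\beta).
\]
The two cross terms are precisely the mean-square deviations of $f$ from $g$ along the orbits of $x$ and of $y$. The point is to evaluate them with the pointwise ergodic theorem applied to $|f-g|^{2}\in L^{1}(X,\mu)$: for every $x$ generic for $|f-g|^{2}$ one gets $e(\alpha,\alpha')=\|f-g\|_{L^{2}(\mu)}$, and likewise for $y$. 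Hence for all $x,y$ generic for $|f-g|^{2}$,
\[
d_{g}(x,y)\geq d_{f}(x,y)-2\|f-g\|_{L^{2}(\mu)}.
\]

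With this in hand the argument closes cleanly. Given any $A\in\Sigma^{+}$, I would intersect it with the full-measure set of points that are simultaneously generic for $f^{2}$, $g^{2}$ and $|f-g|^{2}$ (a finite intersection of full-measure sets, each full by the pointwise ergodic theorem); call the result $A'\in\Sigma^{+}$. Applying the $f$-mean sensitivity of the system to $A'$ produces $x,y\in A'$ with $d_{f}(x,y)>\varepsilon$, and the displayed inequality then gives $d_{g}(x,y)>\varepsilon-2\delta=\varepsilon/2$, with $x,y\in A'\subset A$. Since $A$ was arbitrary, $g\in H_{ms}$.

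The thing to be careful about—more a bookkeeping subtlety than a deep obstacle—is the role of generic points: the pseudometrics $d_{f}$ and $d_{g}$ are only finite-valued on generic points, so one must pass to $A'$ before invoking mean sensitivity of $f$, and must note that restricting to generic points preserves positive measure precisely because they form a set of full measure. Everything else reduces to one application of the triangle inequality together with a single use of the ergodic theorem to identify the cross terms with the $L^{2}$ distance $\|f-g\|_{L^{2}(\mu)}$.
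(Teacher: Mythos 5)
Your proposal is correct and follows essentially the same route as the paper's proof: the triangle inequality for the pseudometric $e$ of Definition~\ref{L2metric}, the pointwise ergodic theorem applied to $|f-g|^{2}$ to identify the cross terms with $\|f-g\|_{L^{2}(\mu)}$, and restriction of $A$ to the full-measure set of generic points. The only differences are cosmetic (you keep the square root in $d_{f}$ and get the clean constant $\varepsilon/2$, whereas the paper works with the squared averages and lands on $\sqrt{\varepsilon}(1-1/\sqrt{2})$), and your explicit attention to which generic sets are needed is if anything slightly more careful than the paper's.
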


\begin{proof}
Let $f\in H_{ms}$ and let $\varepsilon>0$ be a mean sensitivity constant for
$f$.


Let $g\in L^{2}(X,\mu)$ such that $\int\left\vert f-g\right\vert ^{2}d\mu
\leq(\varepsilon/4).$
By the pointwise ergodic theorem there exists $Y\subset X$ with $\mu(Y)=1$
and
\[
\lim\frac{1}{\nu(F_n)}\int_{F_{n}}\left\vert
f(T^{j}x)-g(T^{j}x)\right\vert ^{2}d\nu(j)\leq(\varepsilon/8)\text{ for every
}x\in Y.
\]
For every $A\in\Sigma^{+}$ there exist $x,y\in A\cap Y$ such that%
\[
\lim\sup\frac{1}{\nu(F_n)}\int_{F_{n}}\left\vert
f(T^{j}x)-f(T^{j}y)\right\vert ^{2}d\nu(j)>\varepsilon.
\]
Since $e(\alpha,\beta)$ is a pseudometric we obtain
\begin{align*}
&  \lim\sup\left(  \frac{1}{\nu(F_n) }\int_{F_{n}%
}\left\vert g(T^{j}x)-g(T^{j}y)\right\vert ^{2}d\nu(j)\right)  ^{1/2}\\
&  \geq\lim\sup\left(  \frac{1}{\nu(F_n) }\int_{F_{n}%
}\left\vert f(T^{j}x)-f(T^{j}y)\right\vert ^{2}d\nu(j)\right)  ^{1/2}\\
&  -\lim\left(  \frac{1}{\nu(F_n) }\int_{F_{n}}\left\vert
f(T^{j}x)-g(T^{j}x)\right\vert ^{2}d\nu(j)\right)  ^{1/2}\\
&  -\lim\left(  \frac{1}{\nu(F_n) }\int_{F_{n}}\left\vert
f(T^{j}y)-g(T^{j}y)\right\vert ^{2}d\nu(j)\right)  ^{1/2}\\
&  >\sqrt{\varepsilon}(1-1/\sqrt{2}).
\end{align*}
Thus $g\in H_{ms}$ and so $H_{ms}\subset L^{2}(X,\mu)$ is open.
\end{proof}


\begin{theorem}
\label{main copy(1)}Let $(X,\mu,T)$ be an ergodic $\mathbb{G-}$MPS. Then
$H_{ap}^{c}=H_{ms}.$

\end{theorem}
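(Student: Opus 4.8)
The plan is to prove the two inclusions separately. Throughout I will use that $H_{ms}$ is open (Lemma~\ref{open}), so $H_{ms}^{c}$ is closed, together with the orthogonal decomposition $f=f_{ap}+g$, where $f_{ap}$ is the projection of $f$ onto the closed subspace $H_{ap}$ (the closed span of the eigenfunctions, by Proposition~\ref{prop}(1)) and $g$ lies in its orthogonal complement.

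\emph{The inclusion $H_{ms}\subseteq H_{ap}^{c}$, i.e.\ $H_{ap}\subseteq H_{ms}^{c}$.} Since $H_{ms}^{c}$ is closed and the eigenfunctions span a dense subspace of $H_{ap}$, it suffices to show that every finite combination $f=\sum_{k=1}^{m}c_{k}e_{k}$ of eigenfunctions fails to be mean sensitive. Using ergodicity I may assume the eigenvalues $w_{k}$ are distinct and, by Proposition~\ref{prop}(4), that $|e_{k}|\equiv 1$. From $e_{k}(T^{j}x)=e^{2\pi i\langle w_{k},j\rangle}e_{k}(x)$, expanding $|f(T^{j}x)-f(T^{j}y)|^{2}$ and averaging over $F_{n}$ makes each off-diagonal character $e^{2\pi i\langle w_{k}-w_{l},j\rangle}$ ($k\neq l$) average to $0$, yielding the exact identity $d_{f}(x,y)^{2}=\sum_{k}|c_{k}|^{2}|e_{k}(x)-e_{k}(y)|^{2}$ for all $x,y$. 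Given $\varepsilon>0$, partitioning the compact image of $x\mapsto(e_{1}(x),\dots,e_{m}(x))$ into cells of small diameter produces, by pigeonhole, a set $A\in\Sigma^{+}$ on which $d_{f}(x,y)<\varepsilon$ for all $x,y\in A$; hence $f\notin H_{ms}$.

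\emph{The inclusion $H_{ap}^{c}\subseteq H_{ms}$.} Let $f\notin H_{ap}$, so $g\neq 0$. Writing $B_{n}(x,y)=\frac{1}{\nu(F_{n})}\int_{F_{n}}f(T^{j}x)\overline{f(T^{j}y)}\,d\nu$ and $A_{n}(x)=\frac{1}{\nu(F_{n})}\int_{F_{n}}|f(T^{j}x)|^{2}\,d\nu$, one has $\frac{1}{\nu(F_{n})}\int_{F_{n}}|f(T^{j}x)-f(T^{j}y)|^{2}\,d\nu=A_{n}(x)+A_{n}(y)-2\operatorname{Re}B_{n}(x,y)$, with $A_{n}(x)\to\int|f|^{2}\,d\mu$ a.e.\ by the pointwise ergodic theorem. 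The key claim is that $B_{n}\to\Psi$ in $L^{2}(\mu\times\mu)$: indeed $B_{n}$ is the Følner average of $f\otimes\bar f$ under $U\times U$, so by the mean ergodic theorem it converges to the projection $\Psi$ of $f\otimes\bar f$ onto the $(U\times U)$-invariant vectors. I then identify and bound $\Psi$: the contributions of $f_{ap}\otimes\bar g$, $g\otimes\bar f_{ap}$ and $g\otimes\bar g$ vanish, by Wiener's lemma applied to the (continuous) spectral measure of $g$, giving $\frac{1}{\nu(F_{n})}\int_{F_{n}}|\langle U^{j}g,g\rangle|^{2}\,d\nu\to 0$, together with a Cauchy--Schwarz estimate using that $|\langle U^{j}f_{ap},f_{ap}\rangle|$ is bounded. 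Hence $\Psi=\sum_{w}(P_{w}f)\otimes\overline{(P_{w}f)}$ over eigenvalues $w$, and ergodicity (each eigenspace one-dimensional with unit-modulus generator) gives $\sum_{w}|(P_{w}f)(x)|^{2}\equiv\int|f_{ap}|^{2}\,d\mu$, so $|\Psi(x,y)|\le\int|f_{ap}|^{2}\,d\mu$ a.e.

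Passing to a subsequence along which $B_{n_{l}}\to\Psi$ a.e.\ gives $\liminf_{n}\operatorname{Re}B_{n}(x,y)\le\int|f_{ap}|^{2}\,d\mu$, and therefore, for a.e.\ $(x,y)$,
\[
d_{f}(x,y)^{2}=2\int|f|^{2}\,d\mu-2\liminf_{n}\operatorname{Re}B_{n}(x,y)\ge 2\int|g|^{2}\,d\mu>0.
\]
Since $(\mu\times\mu)(A\times A)=\mu(A)^{2}>0$ for every $A\in\Sigma^{+}$, such $A$ contains pairs of $f^{2}$-generic points with $d_{f}(x,y)\ge(2\int|g|^{2}\,d\mu)^{1/2}$, so any $\varepsilon<(2\int|g|^{2}\,d\mu)^{1/2}$ is a sensitivity constant and $f\in H_{ms}$. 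The main obstacle is precisely the analysis of the nonconventional average $B_{n}$: establishing its $L^{2}(\mu\times\mu)$ convergence and, above all, the pointwise bound $|\Psi|\le\int|f_{ap}|^{2}\,d\mu$, which forces essential use of ergodicity (simplicity of eigenvalues and constant-modulus eigenfunctions) and of Wiener's lemma to annihilate the continuous-spectrum part; the passage to a subsequence is what lets me bound the $\liminf$, hence the $\limsup$ defining $d_{f}$, without proving full a.e.\ convergence of $B_{n}$.
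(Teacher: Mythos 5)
Your proof is correct, but for the substantive inclusion $H_{ap}^{c}\subseteq H_{ms}$ it takes a genuinely different route from the paper. The paper argues directly from non-compactness: failure of total boundedness of $\left\{ U^{j}f\right\}$ yields an infinite set $S$ of times with $\int|U^{s}f-U^{t}f|^{2}d\mu\geq\varepsilon$ for $s\neq t$ in $S$; for any $A\in\Sigma^{+}$ two of the sets $T^{-s}A$, $s\in S$, meet in positive measure, and applying the pointwise ergodic theorem to $|U^{s}f-U^{t}f|^{2}$ at a generic point of $T^{-s}A\cap T^{-t}A$ and translating by $s$ and $t$ produces the required separated pair inside $A$. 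You instead run the spectral argument: split $f=f_{ap}+g$, apply the mean ergodic theorem to $f\otimes\bar{f}$ in the product system, kill the cross terms and the weakly mixing term via Wiener's lemma, identify the limit $\Psi=\sum_{w}P_{w}f\otimes\overline{P_{w}f}$, and bound $|\Psi|\leq\int|f_{ap}|^{2}d\mu$ using one-dimensionality of eigenspaces and constancy of $|e_{w}|$. This costs more machinery --- the identification of the $T\times T$-invariant functions and the F\o lner-average form of Wiener's lemma over $[0,n]^{d}$ are asserted rather than proved and should be cited or checked for $\mathbb{G}=\mathbb{R}^{d}$ --- but it buys more: you obtain $d_{f}(x,y)^{2}\geq2\|f-f_{ap}\|_{2}^{2}>0$ for $\mu\times\mu$-a.e.\ pair, i.e.\ $\mu$-$f$-mean \emph{expansivity} with an explicit constant, which the paper only derives afterwards in Theorem~\ref{strongsensitive}, whereas the paper's argument gives a sensitivity constant tied to the separation produced by non-total-boundedness. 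For the reverse inclusion both arguments reduce to eigenfunctions via Lemma~\ref{open}; note that your explicit treatment of finite linear combinations (the character-averaging identity plus pigeonhole on the compact image of $x\mapsto(e_{1}(x),\dots,e_{m}(x))$) is actually needed, since $H_{ms}^{c}$ is closed but not obviously a linear subspace, so the paper's one-line passage from single eigenfunctions to the closed span tacitly uses exactly the step you supply.
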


\begin{proof}
We first show that $H_{ap}^{c}\subseteq H_{ms}.$

Assume that $f \in H_{ap}^{c}$, and so $cl\left\{  U^{j}f:j\in\mathbb{G}%
\right\}  $ is not compact. This implies it is not totally bounded, hence
there exists $\varepsilon>0$ and an infinite subset $S\subset\mathbb{G}$ such
that
\begin{equation}
\int\left\vert U^{i}f-U^{j}f\right\vert ^{2}d\mu\geq\varepsilon\text{ for
every }i\neq j\in S
\end{equation}

Let $A\in\Sigma^{+}$. There exist $s\neq t\in S$ such that
\[
\mu(T^{-s}A\cap T^{-t}A)>0.
\]

Let $g(x):=\left\vert U^{s}f(x)-U^{t}f(x)\right\vert ^{2}.$ By the pointwise
ergodic theorem, there exists $z\in T^{-s}A\cap T^{-t}A$ such that
\[
\lim\frac{1}{\nu(F_n) }\int_{F_{n}}g(T^{j}z)d\nu(j)=\int
gd\mu\geq\varepsilon.
\]

Let $p:=T^{s}z\in A$ and $q:=T^{t}z\in A.$ Since $\mathbb{G}$ is commutative
we have that
\begin{align*}
&  \lim\frac{1}{\nu(F_n) }\int_{F_{n}}\left\vert
f(T^{j}p)-f(T^{j}q)\right\vert ^{2}d\nu(j)\\
&  =\lim\frac{1}{\nu(F_n) }\int_{F_{n}}\left\vert
f(T^{j+s}z)-f(T^{j+t}z)\right\vert ^{2}d\nu(j)\\
&  =\lim\frac{1}{\nu(F_n) }\int_{F_{n}}g(T^{j}%
z)d\nu(j)>\varepsilon.
\end{align*}

We conclude that for every $A\in\Sigma^{+}$ there exist $p,q\in A$ such that
\[
\lim\frac{1}{\nu(F_n) }\int_{F_{n}}\left\vert
f(T^{j}p)-f(T^{j}q)\right\vert ^{2}d\nu(j)>\varepsilon.
\]


Now we show that $H_{ap} \subseteq H_{ms}^{c}.$

Let $\varepsilon>0$ and $g$ be a measurable function. There exists a set
$X_{\varepsilon}\in\Sigma_{+}$ such that $\left\vert g(x)-g(y)\right\vert
^{2}\leq\varepsilon$ \ for every $x,y\in X_{\varepsilon}.$

Now assume that $g$ is an eigenfunction and so there exists $w\mathcal{\in
}\mathbb{R}^{d}$ such that $U^{j}(g)=e^{2\pi i\left\langle w,j\right\rangle
}g.$ Then for every $x,y\in X_{\varepsilon}$ and every $j\in\mathbb{G}$ we
have that
\begin{align*}
&  \left\vert g(T^{j}x)-g(T^{j}y)\right\vert ^{2}\\
&  =\left\vert e^{2\pi i\left\langle w,j\right\rangle }(g(x)-g(y))\right\vert
^{2}\\
&  =\left\vert g(x)-g(y)\right\vert ^{2}\leq\varepsilon.
\end{align*}
Thus $g\in H_{ms}^{c}.$

Using Lemma \ref{open} and Proposition~\ref{prop} (part 1), we get that
$H_{ap}\subset H_{ms}^{c},$ as desired.

\end{proof}

Note that the proof above shows that in the definition of $H_{ms}$ we can
replace $\limsup$ with $\lim$, i.e., $f\in H_{ms}$ if and only if there exists
$\varepsilon>0$ such that for every $A\in\Sigma^{+}$ there exists $x,y\in A$
such that%
\[
\lim\frac{1}{\nu(F_n) }\int_{F_{n}}\left\vert
f(T^{j}x)-f(T^{j}y)\right\vert ^{2}d\nu(j)>\varepsilon.
\]


\begin{corollary}
\label{propo}\bigskip Let $(X,\mu,T)$ be an ergodic ergodic $\mathbb{G-}$MPS.

$1)$ $(X,\mu,T)$ has discrete spectrum if and only if $L^{2}(X,\mu)=H_{ms}%
^{c}$

$2)$ $(X,\mu,T)$ is weakly mixing if and only if $f\in H_{ms}$ for all
non-constant $f\in L^{2}(X,\mu)$ .

\end{corollary}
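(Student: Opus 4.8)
The plan is to derive both statements by combining the identity $H_{ap}^{c}=H_{ms}$ from Theorem~\ref{main copy(1)} (equivalently $H_{ap}=H_{ms}^{c}$) with the classical Halmos--von~Neumann characterizations recorded in Theorems~\ref{discrete} and~\ref{weak}. Since the substantive work has already been carried out in Theorem~\ref{main copy(1)}, I expect each part of Corollary~\ref{propo} to reduce to a short chain of equivalences with essentially no technical obstacle; the entire content is a translation of spectral conditions into the language of mean sensitivity.

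For part $1)$, I would start from Theorem~\ref{discrete}, which asserts that $(X,\mu,T)$ has discrete spectrum if and only if $L^{2}(X,\mu)=H_{ap}$. I would then substitute the equality $H_{ap}=H_{ms}^{c}$ supplied by Theorem~\ref{main copy(1)} to rewrite this condition as $L^{2}(X,\mu)=H_{ap}=H_{ms}^{c}$. This yields the claimed equivalence immediately, with no further argument needed.

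For part $2)$, I would begin from Theorem~\ref{weak}, which states that $(X,\mu,T)$ is weakly mixing if and only if $H_{ap}$ consists only of constant functions. The key observation is that constant functions always belong to $H_{ap}$ (they are eigenfunctions with $w=0$, hence fixed by every $U^{j}$), so the condition ``$H_{ap}$ consists only of constant functions'' is equivalent to the statement that a function $f$ is non-constant precisely when $f\in H_{ap}^{c}$. Applying $H_{ap}^{c}=H_{ms}$ from Theorem~\ref{main copy(1)}, this is exactly the assertion that every non-constant $f\in L^{2}(X,\mu)$ lies in $H_{ms}$. Chaining these equivalences gives the desired characterization of weak mixing.

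The only point requiring a small amount of care---and the nearest thing to an obstacle---is the logical step in part $2)$: one must note explicitly that constant functions are automatically almost periodic, so that the condition $H_{ap}=\{\text{constants}\}$ converts into a statement universally quantified over all non-constant functions rather than merely over some functions. Once this is observed, both parts of the corollary follow directly from the cited results.
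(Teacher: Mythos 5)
Your proposal is correct and matches the paper's proof exactly: the paper likewise derives part $1)$ from Theorems~\ref{discrete} and~\ref{main copy(1)} and part $2)$ from Theorems~\ref{weak} and~\ref{main copy(1)}, merely citing these results without spelling out the substitution. Your additional remark that constant functions are automatically almost periodic (so the condition in Theorem~\ref{weak} quantifies correctly over non-constant functions) is a valid and harmless elaboration of the same argument.
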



\begin{proof}
$1)$ Follows from Theorems \ref{discrete} and~\ref{main copy(1)}.

$2)$ Follows from Theorems \ref{weak} and~\ref{main copy(1)}.
\end{proof}


\subsection{$\mu-f-$Mean expansivity}

The proof of Theorem \ref{strongsensitive} is analogous to the proof of
Theorem 2.6 of \cite{weakeq} but uses $d_{f}$ instead of the Besicovitch
pseudometric. The proof is short so we write it for completeness.




\begin{definition}
\label{expansive}Let $f\in L^{2}(X,\mu).$ A $\mathbb{G-}$MPS is $\mu
-f-$\textbf{mean expansive} if there exists $\varepsilon>0$ such that
$\mu\times\mu(\left\{  (x,y):d_{f}(x,y)>\varepsilon\right\}  )=1.$ We say that
$\varepsilon$ is a $\mu$-\textbf{mean expansive constant} for $f$.
\end{definition}

\begin{lemma}
\label{ecu}Let $(X,\mu,T)$ be an ergodic $\mathbb{G-}$MPS. Then
\[
g(x):=\mu(\left\{  y:d_{f}(x,y)\leq\varepsilon\right\}  )
\]
$\ $is constant for almost every $x\in X$ and equal to $\mu\times\mu\left\{
(x,y):d_{f}(x,y)\leq\varepsilon\right\}  .$
\end{lemma}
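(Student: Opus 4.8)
The plan is to show that $g$ is invariant under the action and then invoke ergodicity. Set $E:=\{(x,y):d_{f}(x,y)\leq\varepsilon\}$, so that $g(x)=\mu(E_{x})$, where $E_{x}:=\{y:d_{f}(x,y)\leq\varepsilon\}$ is the $x$-section of $E$. Since $d_{f}$ is a $\limsup$ of jointly measurable functions of $(x,y)$, the set $E$ is $\mu\times\mu$-measurable; hence by Fubini its sections $E_{x}$ are measurable for a.e.\ $x$ and $g$ is a measurable function. Once $g$ is known to be $T$-invariant, ergodicity forces it to be constant a.e., and integrating $g$ together with Tonelli's theorem identifies the constant value with $(\mu\times\mu)(E)$, which is exactly the assertion.

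The crucial ingredient is that $d_{f}$ is invariant under the diagonal action, i.e.\ $d_{f}(T^{i}x,T^{i}y)=d_{f}(x,y)$ for every $i\in\mathbb{G}$ and all $x,y$ generic for $f^{2}$. To see this I would write $d_{f}(T^{i}x,T^{i}y)$ using the integrand $\left\vert f(T^{j+i}x)-f(T^{j+i}y)\right\vert ^{2}$ and change variables to replace the averaging region $F_{n}$ by $F_{n}+i$. Because $F_{n}=[0,n]^{d}$ is a F\o lner sequence, the symmetric difference of $F_{n}$ and $F_{n}+i$ has $\nu$-measure negligible relative to $\nu(F_{n})$ as $n\to\infty$; hence the $\limsup$ defining $d_{f}(T^{i}x,T^{i}y)$ coincides with that defining $d_{f}(x,y)$. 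The same boundary estimate shows that the conull set of points generic for $f^{2}$ is preserved by each $T^{i}$, so that $d_{f}$ and all the quantities above are genuinely defined throughout a single $T$-invariant set of full measure.

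Granting the diagonal invariance, I would deduce the $T$-invariance of $g$ using only measure preservation, so that the argument covers possibly non-invertible actions as well. Fix $i\in\mathbb{G}$ and set $B:=\{y:d_{f}(T^{i}x,y)\leq\varepsilon\}$. Since $T^{i}$ is measure preserving we have $\mu(B)=\mu((T^{i})^{-1}B)$, and by the diagonal invariance
\[
(T^{i})^{-1}B=\{z:d_{f}(T^{i}x,T^{i}z)\leq\varepsilon\}=\{z:d_{f}(x,z)\leq\varepsilon\}=E_{x}.
\]
Therefore $g(T^{i}x)=\mu(B)=\mu((T^{i})^{-1}B)=\mu(E_{x})=g(x)$, so $g$ is invariant under every $T^{i}$. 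By ergodicity of $(X,\mu,T)$ an invariant measurable function is constant a.e., so $g\equiv c$, and $c=\int_{X}g\,d\mu=\int_{X}\mu(E_{x})\,d\mu(x)=(\mu\times\mu)(E)$ by Tonelli.

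The main obstacle is the diagonal-invariance step: one must justify carefully that shifting the averaging window $F_{n}$ by a fixed $i$ leaves the $\limsup$ unchanged, treating the $\mathbb{G}=\mathbb{Z}^{d}$ (counting) and $\mathbb{G}=\mathbb{R}^{d}$ (Lebesgue) cases uniformly through the F\o lner boundary estimate, and one must confine all manipulations to a single $T$-invariant conull set of generic points so that $d_{f}$ is defined where needed. The remaining steps, namely the measurability of $E$, the invariance-implies-constant use of ergodicity, and the final Fubini--Tonelli computation, are routine.
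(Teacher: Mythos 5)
Your proof is correct and follows essentially the same route as the paper: measurability of $d_{f}$, Fubini--Tonelli to identify $\int g\,d\mu$ with $(\mu\times\mu)(E)$, and $T$-invariance of $g$ plus ergodicity to get constancy. The only difference is that you spell out the invariance step (diagonal invariance of $d_{f}$ on the generic points via the F\o lner window shift, then measure preservation), which the paper simply asserts with ``Since $g$ is $T$-invariant''; note that for unbounded $f\in L^{2}$ the boundary term is controlled by genericity for $f^{2}$, not by the F\o lner property alone, as you implicitly use.
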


\begin{proof}
It is not hard to see that $d_{f}(x,y)$ is $\mu\times\mu-$measurable. This
means that $\left\{  (x,y):d_{f}(x,y)\leq\varepsilon\right\}  $ is $\mu
\times\mu-$measurable for every $\varepsilon>0$. Using Fubini's Theorem we
obtain that
\begin{align*}
\mu\times\mu\left\{  (x,y):d_{f}(x,y)\leq\varepsilon\right\}   &  =\int
_{X}\int_{X}1_{\left\{  (x,y):d_{f}(x,y)\leq\varepsilon\right\}  }d\mu
(y)d\mu(x)\\
&  =\int_{X}\mu\left\{  y:d_{f}(x,y)\leq\varepsilon\right\}  d\mu(x).
\end{align*}

Since $g$ is $T$-invariant we conclude that $g(x)$ is constant for almost
every $x\in X$ and equal to $\mu\times\mu\left\{  (x,y):d_{f}(x,y)\leq
\varepsilon\right\}  .$
\end{proof}

\begin{definition}
\label{meansensitive2}Let $B_{\varepsilon}^{f}(x):=\left\{  y:d_{f}%
(x,y)\leq\varepsilon\right\}  .$
\end{definition}

\begin{theorem}
\label{strongsensitive}Let $(X,\mu,T)$ be an ergodic system and $f\in
L^{2}(X,\mu).$ The following are equivalent:

$1)$ $(X,T)$ is $\mu-f-$mean sensitive.

$2)$ $(X,T)$ is $\mu-f-$mean expansive$.$

$3)$ There exists $\varepsilon>0$ such that for almost every $x,$
$\mu(B_{\varepsilon}^{f}(x))=0.$
\end{theorem}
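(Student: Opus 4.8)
The plan is to establish the three equivalences by first reducing $(2)\Leftrightarrow(3)$ to a restatement of Lemma~\ref{ecu}, then handling the easy implication $(2)\Rightarrow(1)$, and finally carrying out the one substantive step $(1)\Rightarrow(3)$ by contraposition, where the triangle inequality for the pseudometric $d_f$ does the real work. Together these give the cycle $(1)\Rightarrow(3)\Rightarrow(2)\Rightarrow(1)$.

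For $(2)\Leftrightarrow(3)$, I would observe that $(X,\mu,T)$ is $\mu$-$f$-mean expansive with constant $\varepsilon$ precisely when $\mu\times\mu\{(x,y):d_{f}(x,y)\le\varepsilon\}=0$. By Lemma~\ref{ecu} this quantity equals the almost-everywhere constant value of $g(x)=\mu(B_{\varepsilon}^{f}(x))$, so it vanishes if and only if $\mu(B_{\varepsilon}^{f}(x))=0$ for a.e. $x$, which is exactly condition $(3)$ with the same $\varepsilon$. Hence $(2)$ and $(3)$ hold with identical constants and are equivalent.

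For $(2)\Rightarrow(1)$, suppose $\mu\times\mu\{d_{f}>\varepsilon\}=1$. Given any $A\in\Sigma^{+}$ we have $(\mu\times\mu)(A\times A)=\mu(A)^{2}>0$, and since the complementary set $\{d_{f}\le\varepsilon\}$ is $\mu\times\mu$-null, the set $(A\times A)\cap\{d_{f}>\varepsilon\}$ has positive measure and is in particular nonempty. This produces $x,y\in A$ with $d_{f}(x,y)>\varepsilon$, so $(1)$ holds with the same constant. The crux is $(1)\Rightarrow(3)$, which I would prove in contrapositive form: assuming $(3)$ fails, I construct for every $\varepsilon>0$ a set $A\in\Sigma^{+}$ whose $d_{f}$-diameter is at most $\varepsilon$, thereby ruling out every candidate sensitivity constant. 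Negating $(3)$ means that for each $\varepsilon>0$ the set $\{x:\mu(B_{\varepsilon}^{f}(x))>0\}$ is not null; since $g(x)=\mu(B_{\varepsilon}^{f}(x))$ is a.e. constant by Lemma~\ref{ecu}, that constant is positive, so in fact $\mu(B_{\varepsilon}^{f}(x))>0$ for almost every $x$. Now fix $\varepsilon>0$, apply this at scale $\varepsilon/2$ to choose a point $x_{0}$ that is generic for $f^{2}$ and satisfies $\mu(B_{\varepsilon/2}^{f}(x_{0}))>0$, and set $A=B_{\varepsilon/2}^{f}(x_{0})\in\Sigma^{+}$. The triangle inequality then gives $d_{f}(x,y)\le d_{f}(x,x_{0})+d_{f}(x_{0},y)\le\varepsilon$ for all $x,y\in A$, so no pair in $A$ is $\varepsilon$-separated; as $\varepsilon>0$ was arbitrary, no constant can witness mean sensitivity, and $(1)$ fails.

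I expect the main obstacle to be the bookkeeping that converts ``sensitivity fails at every scale'' into the single statement $(3)$, together with two routine points that legitimize the construction: that the section $B_{\varepsilon}^{f}(x)$ is measurable for a.e. $x$, so that $\mu(B_{\varepsilon}^{f}(x))$ is defined (this follows from the $\mu\times\mu$-measurability of $d_{f}$ and Fubini, exactly as in Lemma~\ref{ecu}), and that $x_{0}$ may be taken generic for $f^{2}$, a full-measure condition ensuring $d_{f}(x_{0},\cdot)$ is finite and the triangle inequality applies. The genuinely load-bearing feature is that $d_{f}$ is an honest pseudometric: it is the triangle inequality that turns a single positive-measure ball of radius $\varepsilon/2$ into a positive-measure set of diameter $\varepsilon$, and this is precisely what collapses mean sensitivity into the much stronger mean expansivity.
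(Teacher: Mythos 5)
Your proposal is correct and follows essentially the same route as the paper: the equivalence of $(2)$ and $(3)$ via Lemma~\ref{ecu}, the positive measure of $A\times A$ for $(2)\Rightarrow(1)$, and the key step that a positive-measure ball $B_{\varepsilon/2}^{f}(x_{0})$ has $d_{f}$-diameter at most $\varepsilon$ by the triangle inequality for $(1)\Rightarrow(3)$ (stated in the paper as a contradiction rather than a contrapositive). The extra care you take with measurability of the sections and genericity for $f^{2}$ is consistent with the paper's conventions and does not change the argument.
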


\begin{proof}
$2)\Rightarrow1)$ Let $\varepsilon$ be a $\mu$-mean expansive constant for
$f$. Let $A\in\Sigma^{+}$ and so $A\times A\in(\Sigma\times\Sigma)^{+}.$ By
hypothesis we can find $(x,y)\in A\times A$ such that $d_{f}(x,y)>\varepsilon
.$

$1)\Rightarrow3)$

Suppose $(X,T)$ is $\mu-f-$mean sensitive (with $\mu-f-$mean sensitivity
constant $\varepsilon$) and that $3)$ is not satisfied. This means there
exists $x\in X$ such that $\mu(B_{\varepsilon/2}^{f}(x)) > 0.$
For any $y,z \in B_{\varepsilon/2}^{f}(x)$,
we have that $d_{f}(y,z)\le\varepsilon.$ This contradicts the assumption that
$(X,T)$ is $\mu-f-$mean sensitive.

$3)\Rightarrow2)$

Using Lemma \ref{ecu} we obtain that $\mu\times\mu\left\{  (x,y):d_{f}%
(x,y)\leq\varepsilon\right\}  =0.$
\end{proof}

\medskip


\subsection{Equivalent pseudometrics}

\label{equivalent_metrics}

We consider the following pseudometrics, which turn out to be equivalent to
$d_{f}$ when $f$ is bounded.

\begin{definition}
\label{L1metric} Let $f\in L^{2}(X,\mu).$ Define
\[
d_{f}^{\prime}(x,y):=\lim\sup\frac{1}{\nu(F_{n})}\int_{F_{n}}\left\vert
f(T^{j}x)-f(T^{j}y)\right\vert d\nu(j)
\]

\end{definition}

\begin{definition}
\label{density_metric} Let $f\in L^{2}(X,\mu).$ Define
\[
\rho_{f}(x,y):=\inf\left\{  \varepsilon>0:\overline{D}(\Delta_{\varepsilon
}(x,y))<\varepsilon\right\}  ,
\]
where
\[
\Delta_{\varepsilon}(x,y):=\left\{  i\in\mathbb{G}:\text{ }|f(T^{i}%
x)-f(T^{i}y)|>\varepsilon\right\}  .
\]

\end{definition}

\begin{proposition}
\label{eq_metrics} For an $\mathbb{G-}$MPS $(X,\mu,T)$ and $f\in L^{\infty
}(X,\mu)$, the pseudometrics $d_{f}(x,y),d_{f}^{\prime}(x,y)$, and $\rho
_{f}(x,y)$ are equivalent, i.e., generate the same topology.
\end{proposition}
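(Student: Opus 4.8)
The plan is to prove two-sided comparison inequalities between the three pseudometrics, each with a modulus of continuity that is continuous and vanishes at $0$; this gives uniform equivalence, which in particular implies they generate the same topology. Throughout write $M:=\|f\|_{\infty}$ and fix $x,y$ (generic for $f^{2}$, as per the standing convention), noting that $|f(T^{j}x)-f(T^{j}y)|\le 2M$ for all $j$. Boundedness makes every $\limsup$ below automatically finite, so nothing beyond Definition~\ref{L2metric} is needed for well-definedness.

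First I would compare $d_{f}$ and $d_{f}'$. The pointwise bound $|f(T^{j}x)-f(T^{j}y)|^{2}\le 2M\,|f(T^{j}x)-f(T^{j}y)|$, integrated over $F_{n}$ and passed to the $\limsup$, yields $d_{f}^{2}\le 2M\,d_{f}'$, while the Cauchy--Schwarz (Jensen) inequality for the normalized measure $d\nu/\nu(F_{n})$ on $F_{n}$ gives $d_{f}'\le d_{f}$. Together these read $d_{f}'\le d_{f}\le\sqrt{2M\,d_{f}'}$, so $d_{f}$ and $d_{f}'$ are equivalent.

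Next I would compare $d_{f}'$ and $\rho_{f}$. For the bound $\rho_{f}\le\sqrt{d_{f}'}$, apply Markov's inequality on $F_{n}$ to get $\nu(\Delta_{\epsilon}(x,y)\cap F_{n})/\nu(F_{n})\le \epsilon^{-1}\,\nu(F_{n})^{-1}\int_{F_{n}}|f(T^{j}x)-f(T^{j}y)|\,d\nu$; taking $\limsup$ gives $\overline{D}(\Delta_{\epsilon}(x,y))\le d_{f}'/\epsilon$, which is strictly less than $\epsilon$ as soon as $\epsilon>\sqrt{d_{f}'}$, hence $\rho_{f}\le\sqrt{d_{f}'}$. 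For the reverse bound $d_{f}'\le(1+2M)\rho_{f}$, fix $\epsilon>\rho_{f}(x,y)$, split $\int_{F_{n}}|f(T^{j}x)-f(T^{j}y)|\,d\nu$ over $\Delta_{\epsilon}(x,y)$ and its complement, bound the first part by $2M\,\nu(\Delta_{\epsilon}\cap F_{n})$ and the second by $\epsilon\,\nu(F_{n})$, and pass to the $\limsup$ to obtain $d_{f}'\le 2M\,\overline{D}(\Delta_{\epsilon})+\epsilon<(1+2M)\epsilon$; letting $\epsilon\downarrow\rho_{f}$ finishes it.

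The step that needs care, and that I expect to be the main obstacle, is justifying that $\overline{D}(\Delta_{\epsilon}(x,y))<\epsilon$ holds for \emph{every} $\epsilon>\rho_{f}(x,y)$, not merely for values approaching the infimum. This follows from a monotonicity observation: since $\Delta_{\epsilon'}\subseteq\Delta_{\epsilon}$ whenever $\epsilon'>\epsilon$, the map $\epsilon\mapsto\overline{D}(\Delta_{\epsilon})$ is non-increasing while the right-hand side $\epsilon$ increases, so $\{\epsilon>0:\overline{D}(\Delta_{\epsilon})<\epsilon\}$ is an up-ray and its infimum $\rho_{f}$ controls all larger $\epsilon$. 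Chaining the three comparisons $d_{f}'\le d_{f}\le\sqrt{2M\,d_{f}'}$, $\rho_{f}\le\sqrt{d_{f}'}$, and $d_{f}'\le(1+2M)\rho_{f}$ then shows all three pseudometrics are uniformly equivalent, and in particular induce the same topology.
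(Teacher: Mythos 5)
Your proof is correct and uses essentially the same ingredients as the paper's: the Markov/Chebyshev bound $\overline{D}(\Delta_{\epsilon})\le d_{f}'/\epsilon$, the splitting of the Birkhoff average over $\Delta_{\epsilon}$ and its complement, and the $L^{\infty}$ bound to compare $d_{f}$ with $d_{f}'$. The only differences are organizational: you prove two-sided quantitative bounds (adding the Cauchy--Schwarz direction $d_{f}'\le d_{f}$, which the paper skips), whereas the paper closes the equivalence by a cyclic chain $\rho_{f}\Rightarrow d_{f}'\Rightarrow d_{f}\Rightarrow\rho_{f}$ after normalizing $|f|\le 1/2$; your explicit monotonicity argument for $\epsilon\mapsto\overline{D}(\Delta_{\epsilon})$ is the same fact the paper uses implicitly in its contradiction step.
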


\begin{proof}
Without loss of generality we may assume that $|f| \le1/2$. So, $(d_{f}%
(x,y))^{2} \le d_{f}^{\prime}(x,y)$.

Let $\varepsilon>0.$ It suffices to show that 1) if $\rho_{f}(x,y)<\varepsilon
/2$, then $d_{f}^{\prime}(x,y)<\varepsilon$ and 2) if $d_{f}^{2}%
(x,y)<\varepsilon^{3}$, then $\rho_{f}(x,y)<\varepsilon$.

Proof of 1): If $\rho_{f}(x,y)<\varepsilon/2$, then
\[
\overline{D}(\left\{  j\in\mathbb{G}:|f(T^{j}x)-f(T^{j}y)|>\varepsilon
/2\right\}  )<\varepsilon/2.
\]
So,
\begin{align*}
&  d_{f}^{\prime}(x,y)\\
&  =\lim\sup\frac{1}{\left\vert F_{n}\right\vert }\int_{F_{n}}|f(T^{j}%
x)-f(T^{j}y)|d\nu(j)\\
&  \leq\frac{\varepsilon}{2}\overline{D}(\left\{  j\in\mathbb{G}%
:|f(T^{j}x)-f(T^{j}y)|>\varepsilon/2\right\}  )+\overline{D}(\left\{
j\in\mathbb{G}:|f(T^{j}x)-f(T^{j}y)|\leq\varepsilon/2\right\}  )\\
&  <\varepsilon.
\end{align*}

Proof of 2): Assume that $d_{f}^{2}(x,y)<\varepsilon^{3}$.
Suppose that $\rho_{f}(x,y)\geq\varepsilon$.
Then for all $\delta<\varepsilon$, we obtain
\begin{align*}
&  \lim\sup\frac{1}{\left\vert F_{n}\right\vert }\int_{F_{n}}|f(T^{j}%
x)-f(T^{j}y)|^{2}d\nu(j)\\
&  \geq\delta^{2}\overline{D}(\left\{  j\in\mathbb{G}:|f(T^{j}x)-f(T^{j}%
y)|^{2}>\delta^{2}\right\}  )\\
&  =\delta^{2}\overline{D}(\left\{  j\in\mathbb{G}:|f(T^{j}x)-f(T^{j}%
y)|>\delta\right\}  )\\
&  \geq\delta^{3}.
\end{align*}
Thus
\[
\lim\sup\frac{1}{\nu(F_n) }\int_{F_{n}}|f(T^{j}%
x)-f(T^{j}y)|^{2}d\nu(j)\geq\varepsilon^{3}%
\]
yields a contradiction. We conclude that $\rho_{f}(x,y)<\varepsilon$.

\end{proof}

This implies we can use $d_{f}^{\prime}(x,y)$ or $\rho_{f}(x,y)$ as
alternatives to $d_{f}(x,y)$ in the definition of $\mu$-$f$-mean sensitivity.

\section{Topological results}

\label{topological}

A $\mathbb{G-}$\textbf{topological dynamical system (TDS)} is a pair $(X,T),$
where $X$ is a compact metric space and $T$ is a group action with
$T^{j}:X\rightarrow X$ continuous for every $j\in\mathbb{G}.$

The closed $\varepsilon$-balls of $X$ will be denoted by $B_{\varepsilon}(x)$,
and the collection of Borel sets of $X$ by $\mathcal{B}_{X}$.

\subsection{Mean equicontinuity and mean sensitivity}


\begin{definition}
\label{besi}We define
\[
d_{b}(x,y):=\limsup_{n\rightarrow\infty}\frac{1}{\left\vert F_{n}\right\vert
}\int_{F_{n}}d(T^{j}x,T^{j}y)d\nu(j),
\]%
\[
\rho_{b}(x,y):=\inf\left\{  \varepsilon>0:\overline{D}(\Delta_{\varepsilon
}(x,y))<\varepsilon\right\}  ,
\]
where
\[
\Delta_{\varepsilon}(x,y):=\left\{  i\in\mathbb{G}:~d(T^{i}x,T^{i}%
y)>\varepsilon\right\}  .
\]

\end{definition}



Using subadditivity of $\limsup$, it is not hard to show that $d_{b}$ and
$\rho_{b}$ are indeed pseudometrics. The subscript \textquotedblleft%
$b$\textquotedblright\ stands for \textquotedblleft
Besicovitch,\textquotedblright\ as these are versions of the Besicovitch pseudometric.

\begin{lemma}
\label{eqps}Let $(X,T)$ be a $\mathbb{G-}$TDS. Then $d_{b}$ and $\rho_{b}$ are
equivalent pseudometrics.
\end{lemma}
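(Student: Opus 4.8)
The plan is to prove that the two pseudometrics generate the same topology by establishing one‑sided comparisons in each direction, exactly in the spirit of the proof of Proposition~\ref{eq_metrics}; here the argument is in fact simpler because $d_{b}$ already plays the role of the $L^{1}$-type metric $d_{f}^{\prime}$ and there is no square to track. The one structural fact I will use is that, since $X$ is compact, its diameter $M:=\sup_{u,v\in X}d(u,v)$ is finite; this bound is what drives the passage from $\rho_{b}$ to $d_{b}$. Both comparisons rest on splitting the orbit indices in $F_{n}$ according to whether $d(T^{j}x,T^{j}y)$ exceeds a threshold.

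For the direction $\rho_{b}\to d_{b}$, suppose $\rho_{b}(x,y)<\delta$. By the definition of $\rho_{b}$ as an infimum there is a threshold $\delta'\le\delta$ with $\overline{D}(\Delta_{\delta'}(x,y))<\delta'$. Bounding the integrand by $\delta'$ off $\Delta_{\delta'}(x,y)$ and by $M$ on it gives, for every $n$,
\[
\frac{1}{\nu(F_{n})}\int_{j\in F_{n}}d(T^{j}x,T^{j}y)\,d\nu \le \delta' + M\,\frac{\nu(\Delta_{\delta'}(x,y)\cap F_{n})}{\nu(F_{n})}.
\]
Taking $\limsup$ yields $d_{b}(x,y)\le\delta'+M\,\overline{D}(\Delta_{\delta'}(x,y))\le(1+M)\delta'\le(1+M)\delta$, so the choice $\delta=\epsilon/(1+M)$ forces $d_{b}(x,y)<\epsilon$.

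For the reverse direction $d_{b}\to\rho_{b}$, I will use a Markov-type inequality. For any threshold $\epsilon>0$ we have pointwise $\int_{j\in F_{n}}d(T^{j}x,T^{j}y)\,d\nu\ge\epsilon\,\nu(\Delta_{\epsilon}(x,y)\cap F_{n})$, hence after dividing by $\nu(F_{n})$ and taking $\limsup$,
\[
d_{b}(x,y)\ge\epsilon\,\overline{D}(\Delta_{\epsilon}(x,y)).
\]
Consequently, if $d_{b}(x,y)<(\epsilon/2)^{2}$ then $\overline{D}(\Delta_{\epsilon/2}(x,y))<\epsilon/2$, which places $\epsilon/2$ in the set defining $\rho_{b}$ and gives $\rho_{b}(x,y)\le\epsilon/2<\epsilon$. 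Thus $d_{b}(x,y)<\epsilon^{2}/4$ implies $\rho_{b}(x,y)<\epsilon$.

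These two implications show that each $\rho_{b}$-ball contains a $d_{b}$-ball and conversely, so the pseudometrics are equivalent. I do not expect a genuine obstacle here; the only points needing care are the bookkeeping forced by the infimum in the definition of $\rho_{b}$ (handled by retreating to the slightly smaller threshold $\delta'$) and the appeal to finiteness of the diameter $M$, which is precisely where compactness of $X$ enters.
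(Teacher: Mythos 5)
Your proof is correct and follows essentially the same route as the paper, which simply invokes the argument of Proposition~\ref{eq_metrics} with $|f(T^{i}x)-f(T^{i}y)|$ replaced by $d(T^{i}x,T^{i}y)$: a threshold splitting of the indices in $F_{n}$ for one direction and a Markov-type inequality for the other. The only cosmetic differences are that you bound the integrand by the diameter of $X$ where the paper normalizes ($|f|\le 1/2$, i.e.\ diameter at most $1$), and you phrase the second implication directly rather than by contradiction.
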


\begin{proof}
The proof is nearly identical to the proof of equivalence of $d_{f}^{\prime}$
and $\rho_{f}$ in Proposition~\ref{eq_metrics}: simply replace $|f(T^{i}x) -
f(T^{i}y)|$ by $d(T^{i}x, T^{i}y)$ in the proof.
\end{proof}

\begin{definition}
Let $(X,T)$ be a $\mathbb{G-}$TDS. We say $(X,T)$ is \textbf{mean sensitive}
if for every non-empty open set $U$ there exist $x,y\in U$ such that
$d_{b}(x,y)>\varepsilon.$
\end{definition}

\begin{definition}
\label{mequi}Let $(X,T)$ be a $\mathbb{G-}$TDS. We say $x\in X$ is a
\textbf{mean equicontinuity point} if for every $\varepsilon>0$ there exists
$\delta>0$ such that if $y\in B_{\delta}(x)$ then
\[
d_{b}(x,y)<\varepsilon.
\]
We say $(X,T)$ is \textbf{mean equicontinuous (or mean-L-stable)} if every
$x\in X$ is a mean equicontinuity point. We say $(X,T)$ is \textbf{almost mean
equicontinuous }if the set of mean equicontinuity points is residual.
\end{definition}

Using Lemma~\ref{eqps}, we conclude that $(X,T)$ is mean sensitive if and only
if for every non-empty open set $U$ there exist $x,y\in U$ such that $\rho
_{b}(x,y)>\varepsilon;\ $and $x\in X$ is a mean equicontinuity point if and
only if for every $\varepsilon>0$ there exists $\delta>0$ such that if $y\in
B_{\delta}(x)$ then
\[
\rho_{b}(x,y)<\varepsilon.
\]

\begin{remark}
\label{uniform}
If $(X,T)$ is mean equicontinuous, then it is uniformly mean equicontinuous in
the sense that for every $\varepsilon>0$ there exists $\delta>0$ such that if
$d(x,y)\leq\delta$ then $\overline{D}(i\in\mathbb{G}:d(T^{i}x,T^{i}%
y)>\varepsilon)<\varepsilon.$ One way to see this is by identifying points
$x,y$ such that $d_{b}(x,y)=0$. Namely, we obtain a metric space $\left(
X/d_{b},d_{b}\right)  $ and a natural projection $\pi_{b}:(X,d)\rightarrow
(X/d_{b},d_{b})$ by mapping $x\in X$ to its equivalence class in $X/d_{b}$.
Then mean equicontinuity of $(X,T)$ means that $\pi_{b}$ is continuous and
hence uniformly continuous.
\end{remark}

Mean equicontinuous systems were introduced as mean-L-stable systems by Fomin
\cite{fominoriginal}. These systems have been studied in
\cite{auslander1959mean}, \cite{oxtoby1952}, \cite{scarpellini1982stability}
and \cite{li2013mean}.

In \cite{AkinAuslander}\cite{auslanderyorke} dichotomies between
equicontinuity and sensitivity were exhibited. In \cite{weakeq} (and
independently in \cite{li2013mean}) the following dichotomies involving mean
sensitivity and mean equicontinuity were proved.

\begin{definition}
Let $(X,T)$ be a $\mathbb{G-}$TDS. We say $x\in X$ is a \textbf{transitive
point} if $\left\{  T^{i}x:i\in\mathbb{G}\right\}  $ is dense. We say $(X,T)$
is \textbf{transitive} if $X$ contains a residual set of transitive points. If
every $x\in X$ is transitive then we say the system is \textbf{minimal}.
\end{definition}

\begin{theorem}
\label{sensi copy(1)}\cite{weakeq}\cite{li2013mean}A transitive system is
either almost mean equicontinuous or mean sensitive. A minimal system is
either mean equicontinuous or mean sensitive.
\end{theorem}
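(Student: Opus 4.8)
The plan is to derive both dichotomies from a single mechanism: a $G_\delta$ description of the mean equicontinuity points, combined with the $T$-invariance of $d_b$. First I would record the key structural facts. For $n \ge 1$ set
\[
E_n := \{x \in X : \exists\, \delta > 0 \text{ such that } d_b(y,z) < 1/n \text{ for all } y,z \in B_\delta(x)\}.
\]
A triangle-inequality argument shows that $\bigcap_n E_n$ is exactly the set of mean equicontinuity points, and each $E_n$ is open (if $x \in E_n$ with radius $\delta$, then $B_{\delta/2}(x) \subseteq E_n$ with radius $\delta/2$). The crucial observation is that $d_b$ is $T$-invariant: since $F_n = [0,n]^d$ is F\o lner and $d$ is bounded on the compact space $X$, replacing the averaging window $F_n$ by $F_n + i$ changes the average by $O(\nu(F_n \triangle (F_n+i))/\nu(F_n)) \to 0$, whence $d_b(T^i x, T^i y) = d_b(x,y)$ for every $i$. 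Because the $T^i$ are homeomorphisms, invariance of $d_b$ forces each $E_n$ to be $T$-invariant as well.

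For the transitive statement I would argue by cases on whether every $E_n$ is dense. If each $E_n$ is dense, then $\bigcap_n E_n$ is residual by Baire, so the system is almost mean equicontinuous. Otherwise some $E_{n_0}$ fails to be dense; as it is open and not dense, its complement has nonempty interior, giving a nonempty open $V$ with $V \cap E_{n_0} = \emptyset$. I claim the system is then mean sensitive with constant $\varepsilon = 1/(2n_0)$. Given any nonempty open $U$, transitivity provides $i$ with $T^i U \cap V \neq \emptyset$, so $W := U \cap T^{-i} V$ is nonempty open and $T^i W \subseteq V$. Picking $p \in T^i W$ and a ball $B_\delta(p) \subseteq T^i W$, the fact that $p \notin E_{n_0}$ yields $y,z \in B_\delta(p)$ with $d_b(y,z) \ge 1/n_0$; transporting back by the homeomorphism $T^{-i}$ and using invariance of $d_b$ gives $T^{-i}y, T^{-i}z \in U$ with $d_b(T^{-i}y, T^{-i}z) = d_b(y,z) \ge 1/n_0 > \varepsilon$. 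As $U$ was arbitrary, this is mean sensitivity.

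The minimal statement then follows quickly. A minimal system is transitive, so by the previous paragraph it is almost mean equicontinuous or mean sensitive; it remains to upgrade ``almost'' to ``everywhere'' in the first case. Here the invariance of the $E_n$ does the work: if the system is almost mean equicontinuous then $\bigcap_n E_n$ is residual, hence nonempty, so each $E_n$ is a nonempty open $T$-invariant set. In a minimal system the only nonempty open invariant set is $X$ itself (its complement is a closed invariant proper subset, hence empty by minimality), so $E_n = X$ for every $n$ and therefore $\bigcap_n E_n = X$; that is, every point is a mean equicontinuity point and the system is mean equicontinuous.

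I expect the main obstacle to be the transport step in the transitive case: one must move a local failure of mean equicontinuity on $V$ to an arbitrary open set $U$ while controlling $d_b$, and this is exactly where the invertibility of the $T^i$ and the shift-invariance of $d_b$ are indispensable. The verification that $d_b(T^i x, T^i y) = d_b(x,y)$ via the F\o lner property of $F_n$ is the technical heart that makes the $E_n$ both open and invariant; once this is in hand, the transitive case reduces to Baire category and the minimal case reduces to the characterization of minimality through open invariant sets.
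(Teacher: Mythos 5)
Your proof is correct, and its skeleton (open sets $E_n$ of locally $d_b$-stable points, Baire category for the transitive case, invariance of the $E_n$ plus the characterization of minimality via invariant sets for the minimal case) matches the argument the paper gives for the analogous $f$-relative dichotomy (Theorem \ref{sensi}). The one place you genuinely diverge is the sensitive alternative. The paper first proves that in a transitive system each $E_\varepsilon$ is \emph{empty or dense} (if nonempty and not dense, transitivity produces a point of $U=X\setminus cl(E_\varepsilon)$ in $T^{-t}E_\varepsilon\subseteq E_\varepsilon$, a contradiction), and then in the empty case every open set directly contains a bad pair --- no transport of points is needed. You instead allow the intermediate case ``nonempty but not dense'' and handle it by carrying bad pairs from a fixed bad open set $V$ into an arbitrary $U$ via the homeomorphisms $T^i$ together with the Følner-invariance $d_b(T^ix,T^iy)=d_b(x,y)$. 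Both are valid here, but note the trade-off: your transport step and your full two-sided invariance of the $E_n$ genuinely use that the $T^i$ are invertible, whereas the paper's version only needs \emph{inverse} invariance $T^{-j}E\subseteq E$ (proved from continuity of $T^j$ and shift-invariance of densities) and so survives for semigroup actions such as $\mathbb{Z}_+^d$ or $\mathbb{R}_+^d$, which is the setting of some of the cited sources. Since the paper's standing assumption for TDS is a $\mathbb{Z}^d$- or $\mathbb{R}^d$-action, your reliance on invertibility is harmless here, but it is worth flagging as the real cost of your route; you should also make sure the Følner computation behind $d_b(T^ix,T^iy)=d_b(x,y)$ is written out, as it is the technical hinge of both the invariance of $E_n$ and the transport step.
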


In the next subsection we will introduce versions of mean equicontinuity and
mean sensitivity relative to given $f\in C(X)$ and we will establish similar dichotomies.

\subsection{$f-$Mean equicontinuity and $f-$Mean sensitivity}


\begin{definition}
Let $(X,T)$ be a $\mathbb{G-}$TDS and $f\in C(X).$ We say $(X,T)$ is
$f-$\textbf{mean sensitive} if there exists $\varepsilon>0$ such that for
every non-empty open set $U$ there exists $x,y\in U$ such that
\[
d_{f}(x,y)>\varepsilon.
\]
In this case we also say that $f$ is \textbf{mean sensitive} for $(X,T)$, and
we denote the set of all mean sensitive functions for $(X,T)$ by $C_{ms}$.
\end{definition}

The notion of $f-$mean equicontinuity appeared in
\cite{scarpellini1982stability}.

\begin{definition}
Let $(X,T)$ be a $\mathbb{G-}$TDS and $f\in C(X).$ We say $x\in X$ is a
$f-$\textbf{mean equicontinuity point} if for every $\varepsilon>0$ there
exists $\delta>0$ such that if $d(x,y)\leq\delta$ then $d_{f}(x,y)\leq
\varepsilon.$
We say $(X,T)$ is $f-$\textbf{mean equicontinuous} if all $x\in X$ are
$f-$mean equicontinuous. In this case we also say that $f$ is \textbf{mean
equicontinuous function} for $(X,T)$; we denote the set of mean equicontinuous
functions by $C_{me}$. We say $(X,T)$ is $f-$ \textbf{almost mean
equicontinuous }if the set of mean equicontinuity points is residual.
\end{definition}

\begin{remark}
Since any continuous function on $X$ is bounded, we can use
Proposition~\ref{eq_metrics}, to characterize $f-$mean equicontinuity and
$f-$mean sensitivity by replacing $d_{f}(x,y)$ with $\rho_{f}(x,y)$ in the
definition. We will mainly use the latter.
\end{remark}

Again using the idea that a continuous function on a compact metric set is
uniformly continuous, it is not hard to see that if $(X,T)$ is $f-$mean
equicontinuous, then it is uniformly $f-$mean equicontinuous in the sense that
for every $\varepsilon>0$ there exists $\delta>0$ such that if $d(x,y)\leq
\delta$ then $d_{f}(x,y)\leq\varepsilon.$

\begin{definition}
Let $f\in C(X).$ We denote the set of $f-$mean equicontinuity points by
$E^{f}$ and we define
\[
E_{\varepsilon}^{f}:=\left\{  x\in X:\exists\delta>0\text{ }\forall y,z\in
B_{\delta}(x),\text{ }\underline{D}\left\{  i\in\mathbb{G}:\left\vert
f(T^{i}y)-f(T^{i}z)\right\vert \leq\varepsilon\right\}  \geq1-\varepsilon
\right\}  .
\]

\end{definition}

Note that $E^{f}=\cap_{\varepsilon>0}E_{\varepsilon}^{f}.$

\begin{lemma}
\label{invariant}Let $(X,T)$ be a $\mathbb{G-}$TDS. The sets $E^{f}$,
$E_{\varepsilon}^{f}$ are inversely invariant (i.e. $T^{-j}(E^{f})\subseteq
E^{f}$, $T^{-j}(E_{\varepsilon}^{f})\subseteq E_{\varepsilon}^{f}$ for all
$j\in\mathbb{G)}$ and $E_{\varepsilon}^{f}$ is open.
\end{lemma}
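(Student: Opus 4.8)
The plan is to handle the three assertions in the order: openness of $E_{\varepsilon}^{f}$, inverse invariance of $E_{\varepsilon}^{f}$, and then inverse invariance of $E^{f}$ as an easy consequence. First I would dispose of openness by a routine triangle-inequality argument. Suppose $x\in E_{\varepsilon}^{f}$ with witnessing radius $\delta$, so that every pair $y,z\in B_{\delta}(x)$ satisfies $\underline{D}\{i\in\mathbb{G}:|f(T^{i}y)-f(T^{i}z)|\leq\varepsilon\}\geq 1-\varepsilon$. For any $x'$ in the open ball of radius $\delta/2$ about $x$, the closed ball $B_{\delta/2}(x')$ is contained in $B_{\delta}(x)$, so $\delta/2$ witnesses $x'\in E_{\varepsilon}^{f}$. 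Hence the open $\delta/2$-ball around $x$ lies in $E_{\varepsilon}^{f}$, proving that $E_{\varepsilon}^{f}$ is open.

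The heart of the argument is inverse invariance of $E_{\varepsilon}^{f}$, i.e.\ $T^{-j}(E_{\varepsilon}^{f})\subseteq E_{\varepsilon}^{f}$ for each $j\in\mathbb{G}$. I would fix $x$ with $T^{j}x\in E_{\varepsilon}^{f}$ and let $\delta$ be a witnessing radius for $x':=T^{j}x$. Using continuity of the map $T^{j}$ at $x$, I would choose $\eta>0$ so that $T^{j}(B_{\eta}(x))\subseteq B_{\delta}(x')$. Then for $y,z\in B_{\eta}(x)$ the points $T^{j}y,T^{j}z$ lie in $B_{\delta}(x')$, so $\underline{D}\{i:|f(T^{i}T^{j}y)-f(T^{i}T^{j}z)|\leq\varepsilon\}\geq 1-\varepsilon$. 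Because $\mathbb{G}$ is abelian we have $T^{i}T^{j}=T^{i+j}$, so writing $S:=\{k\in\mathbb{G}:|f(T^{k}y)-f(T^{k}z)|\leq\varepsilon\}$ this set equals the translate $S-j$. The remaining step is to pass from $\underline{D}(S-j)\geq 1-\varepsilon$ back to $\underline{D}(S)\geq 1-\varepsilon$, which yields $x\in E_{\varepsilon}^{f}$ with witness $\eta$.

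The one genuine ingredient, and the step I expect to be the main obstacle, is the translation invariance of lower density: $\underline{D}(S-j)=\underline{D}(S)$ for every fixed $j$ and every $S\subseteq\mathbb{G}$. I would establish this from the F{\o}lner property of the boxes $F_{n}=[0,n]^{d}$: since $\nu(F_{n}\triangle(F_{n}-j))/\nu(F_{n})\to 0$ as $n\to\infty$ for fixed $j$ (a boundary estimate valid in both the $\mathbb{Z}^{d}$ and $\mathbb{R}^{d}$ cases), the quantities $\nu(S\cap F_{n})/\nu(F_{n})$ and $\nu((S+j)\cap F_{n})/\nu(F_{n})=\nu(S\cap(F_{n}-j))/\nu(F_{n})$ differ by an amount tending to $0$, so their $\liminf$s coincide. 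This is purely a statement about $\mathbb{G}$ and does not involve the dynamics; it is exactly the place where the specific structure $\mathbb{G}=\mathbb{Z}^{d}$ or $\mathbb{R}^{d}$ (together with the chosen F{\o}lner sequence) is used.

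Finally, inverse invariance of $E^{f}$ follows immediately. Since $E^{f}=\bigcap_{\varepsilon>0}E_{\varepsilon}^{f}$ and taking preimages commutes with intersection, we get $T^{-j}(E^{f})=\bigcap_{\varepsilon>0}T^{-j}(E_{\varepsilon}^{f})\subseteq\bigcap_{\varepsilon>0}E_{\varepsilon}^{f}=E^{f}$, using the inverse invariance of each $E_{\varepsilon}^{f}$ established above.
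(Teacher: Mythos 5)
Your proof is correct and follows essentially the same route as the paper's: openness via the triangle inequality with radius $\delta/2$, and inverse invariance by using continuity of $T^{j}$ to pull the witnessing ball at $T^{j}x$ back to a ball at $x$ and then invoking commutativity of $\mathbb{G}$. The only difference is that you explicitly justify the final step $\underline{D}(S-j)=\underline{D}(S)$ via the F{\o}lner property of the boxes $F_{n}=[0,n]^{d}$, a point the paper's proof leaves implicit; this is a worthwhile addition rather than a deviation.
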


\begin{proof}
Let $j\in\mathbb{G},$ $\varepsilon>0,$ and $x\in T^{-j}E_{\varepsilon}^{f}.$
There exists $\eta>0$ such that if $d(T^{j}x,z)\leq\eta$ and $d(T^{j}%
x,y)\leq\eta$ then
\[
\underline{D}\left\{  i:\left\vert f(T^{i}y)-f(T^{i}z)\right\vert
\leq\varepsilon\right\}  \geq1-\varepsilon.
\]
There also exists $\delta>0$ such that if $d(x,y)\le\delta$ then
$d(T^{j}x, T^{j}y) \le\eta$. So, if $y,z \in B_{\delta}(x)$, then
$\underline{D}\left\{  i:\left\vert f(T^{i+j}y)-f(T^{i+j}z)\right\vert
\leq\varepsilon\right\}  \geq1-\varepsilon$$.$ We conclude that $x\in
E_{\varepsilon}^{f},$ and so $E_{\varepsilon}^{f}$ is inversely invariant. It
follows that $E^{f}$ is also inversely invariant.

Let $x\in E_{\varepsilon}^{f}$ and $\delta>0$ satisfy the defining property of
$E_{\varepsilon}^{f}.$ If $d(x,w)<\delta/2$ then $w\in E_{\varepsilon}^{f};$
indeed if $y,z\in B_{\delta/2}(w)$ then $y,z\in B_{\delta}(x).$ So,
$E_{\varepsilon}^{f}$ is open.
\end{proof}

It is not hard to see that $f-$mean sensitive systems have no $f-$mean
equicontinuity points. The proof of the following dichotomies is very similar
to the proof of Theorem \ref{sensi copy(1)}; we include it for completeness.

\begin{theorem}
\label{sensi} Let $f\in C(X).$ A transitive system is either $f-$almost mean
equicontinuous or $f-$mean sensitive. A minimal system is either $f-$mean
equicontinuous or $f-$mean sensitive.
\end{theorem}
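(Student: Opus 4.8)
The plan is to carry out the dichotomy entirely at the level of the open sets $E_{\varepsilon}^{f}$, exploiting the fact that, because $\mathbb{G}$ is a group, the inverse invariance supplied by Lemma~\ref{invariant} is the same as full invariance. First I would record two preliminary observations. For invariance: from $T^{-j}(E_{\varepsilon}^{f})\subseteq E_{\varepsilon}^{f}$ for every $j\in\mathbb{G}$, applying $T^{j}$ and using $-j\in\mathbb{G}$ yields the reverse inclusion, so $T^{j}(E_{\varepsilon}^{f})=E_{\varepsilon}^{f}$. For monotonicity: if $\varepsilon_{1}<\varepsilon_{2}$ then $E_{\varepsilon_{1}}^{f}\subseteq E_{\varepsilon_{2}}^{f}$, so that $E^{f}=\bigcap_{\varepsilon>0}E_{\varepsilon}^{f}=\bigcap_{n}E_{1/n}^{f}$ is a countable intersection.

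The key step is to show that in a transitive system each $E_{\varepsilon}^{f}$ is either empty or dense. Indeed, $E_{\varepsilon}^{f}$ is open by Lemma~\ref{invariant}, and if it is nonempty then, since the transitive points are dense, it contains a transitive point $x_{0}$; by invariance its orbit $\{T^{i}x_{0}:i\in\mathbb{G}\}$ lies in $E_{\varepsilon}^{f}$, and this orbit is dense, so $E_{\varepsilon}^{f}$ is dense.

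For the transitive case I would then split into two exhaustive alternatives dictated by the above. If every $E_{1/n}^{f}$ is dense, then each is dense and open, so by the Baire category theorem $E^{f}=\bigcap_{n}E_{1/n}^{f}$ is residual and the system is $f$-almost mean equicontinuous. Otherwise some $E_{1/n_{0}}^{f}$ fails to be dense and is therefore empty; writing $\varepsilon_{0}=1/n_{0}$ and unwinding the negation of the defining property, for every $x\in X$ and every $\delta>0$ there are $y,z\in B_{\delta}(x)$ with $\underline{D}\{i:|f(T^{i}y)-f(T^{i}z)|\le\varepsilon_{0}\}<1-\varepsilon_{0}$. Using $\underline{D}(S)=1-\overline{D}(S^{c})$ this reads $\overline{D}(\Delta_{\varepsilon_{0}}(y,z))>\varepsilon_{0}$, which forces $\rho_{f}(y,z)\ge\varepsilon_{0}$: any $\varepsilon<\varepsilon_{0}$ would give $\overline{D}(\Delta_{\varepsilon}(y,z))\ge\overline{D}(\Delta_{\varepsilon_{0}}(y,z))>\varepsilon_{0}>\varepsilon$, contradicting the infimum in the definition of $\rho_{f}$. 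Hence every nonempty open set contains a pair at $\rho_{f}$-distance exceeding $\varepsilon_{0}/2$, and since $\rho_{f}$ and $d_{f}$ are equivalent (Proposition~\ref{eq_metrics}), the system is $f$-mean sensitive. For the minimal case, minimality implies transitivity, so one of the two alternatives holds; it only remains to upgrade $f$-almost mean equicontinuity to $f$-mean equicontinuity. If the system is not $f$-mean sensitive, then each $E_{\varepsilon}^{f}$ is dense, hence nonempty; being open and invariant, its complement is a proper closed invariant set, which must be empty by minimality, so $E_{\varepsilon}^{f}=X$ for all $\varepsilon$, whence $E^{f}=X$.

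The step requiring the most care is the passage from emptiness of $E_{\varepsilon_{0}}^{f}$ to $f$-mean sensitivity: one must negate the defining quantifiers of $E_{\varepsilon_{0}}^{f}$ correctly, apply the complementary-density identity $\underline{D}(S)=1-\overline{D}(S^{c})$, and extract the lower bound $\rho_{f}(y,z)\ge\varepsilon_{0}$ from $\overline{D}(\Delta_{\varepsilon_{0}}(y,z))>\varepsilon_{0}$. Tracking strict versus non-strict inequalities (the harmless passage to the constant $\varepsilon_{0}/2$) and invoking the equivalence of $\rho_{f}$ with $d_{f}$ are the points where I expect the argument to be most delicate; everything else is routine once invariance and the empty-or-dense alternative are in place.
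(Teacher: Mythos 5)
Your proof is correct and follows essentially the same route as the paper: the dichotomy is driven by showing each open set $E_{\varepsilon}^{f}$ is empty or dense via Lemma~\ref{invariant}, with Baire category giving almost mean equicontinuity in one case and a quantifier negation plus the identity $\underline{D}(S)=1-\overline{D}(S^{c})$ giving mean sensitivity in the other. The only (harmless) cosmetic differences are that you upgrade inverse invariance to full invariance using that $\mathbb{G}$ is a group and argue density via a transitive point's orbit and minimality via closed invariant sets, where the paper works directly with inverse invariance.
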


\begin{proof}
First, we show that if $(X,T)$ is a transitive system then for every
$\varepsilon$, $E_{\varepsilon}^{f}$ is either empty or dense. Assume
$E_{\varepsilon}^{f}$ is non-empty and not dense. Then $U=X\diagdown
(cl(E_{\varepsilon}^{f}))$ is a non-empty open set. Since the system is
transitive and $E_{\varepsilon}^{f}$ is nonempty and open (by Lemma
\ref{invariant}) there exists $t\in\mathbb{G}$ such that $U\cap T^{-t}%
(E_{\varepsilon}^{f})$ is non empty. By Lemma \ref{invariant} we have that
$U\cap T^{-t}(E_{\varepsilon}^{f})\subset U\cap E_{\varepsilon}^{f}%
=\emptyset,$ a contradiction.

If $E_{\varepsilon}^{f}$ is non-empty for every $\varepsilon>0$ then we have
that $E^{f}=\cap_{n\geq1}E_{1/n}^{f}$ is a residual set; hence the system is
$f$-almost mean equicontinuous.

If there exists $\varepsilon>0$ such that $E_{\varepsilon}^{f}$ is empty, then
for any open set $U$ there exist $y,z\in U$ such that $\underline{D}\left\{
i\in\mathbb{G}:\left\vert f(T^{i}y)-f(T^{i}z)\right\vert \leq\varepsilon
\right\}  <1-\varepsilon;$ this means that $\overline{D}\left\{
i\in\mathbb{G}:\left\vert f(T^{i}y)-f(T^{i}z)\right\vert >\varepsilon\right\}
\geq\varepsilon.$ It follows that $(X,T)$ is $f-$mean sensitive.

Now suppose $(X,T)$ is minimal and $f$-almost mean equicontinuous. For every
$x\in X$ and every $\varepsilon>0$ there exists $t\in\mathbb{G}$ such that
$T^{t}x\in E_{\varepsilon}^{f}.$ Since $E_{\varepsilon}^{f}$ is inversely
invariant, we have $x\in E_{\varepsilon}^{f}$. So, $x\in E^{f}.$
\end{proof}

In the following result we use a technique from the topological Halmos
Von-Neumann Theorem (e.g. see Chapter 5.5 \cite{walters2000introduction}).

\begin{theorem}
\label{efmean}Let $(X,T)$ be a $\mathbb{G-}$TDS. Then $(X,T)$ is mean
equicontinuous if and only if it is $f-$mean equicontinuous for every $f\in
C(X)$ (i.e. $C(X)=C_{me}$)$.$
\end{theorem}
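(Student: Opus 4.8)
The plan is to prove the two implications separately, using throughout that by Proposition~\ref{eq_metrics} and Lemma~\ref{eqps} we may freely pass among the equivalent pseudometrics $d_f$, $d_f^{\prime}$, $\rho_f$ and among $d_b$, $\rho_b$, and that every $f\in C(X)$ is bounded and uniformly continuous on the compact space $X$.

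For the forward direction, assume $(X,T)$ is mean equicontinuous and fix $f\in C(X)$, $x\in X$, and $\varepsilon>0$. Since $f$ is uniformly continuous there is $\eta\in(0,\varepsilon)$ with $|f(u)-f(v)|\le\varepsilon$ whenever $d(u,v)\le\eta$, so for all $x,y$ one has the inclusion $\{i:|f(T^ix)-f(T^iy)|>\varepsilon\}\subseteq\{i:d(T^ix,T^iy)>\eta\}$. By the uniform mean equicontinuity provided by Remark~\ref{uniform}, there is $\delta>0$ such that $d(x,y)\le\delta$ forces $\overline{D}\{i:d(T^ix,T^iy)>\eta\}<\eta$. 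Combining these, $d(x,y)\le\delta$ yields $\overline{D}\{i:|f(T^ix)-f(T^iy)|>\varepsilon\}<\eta<\varepsilon$, i.e. $\rho_f(x,y)\le\varepsilon$. As $x$ and $\varepsilon$ were arbitrary this is $f$-mean equicontinuity in the $\rho_f$ formulation; since $f$ was arbitrary, $C(X)=C_{me}$.

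For the converse, the substantive direction, assume $(X,T)$ is $f$-mean equicontinuous for every $f\in C(X)$ and fix $x\in X$ and $\varepsilon>0$; I must produce $\delta>0$ with $d(x,y)\le\delta\Rightarrow d_b(x,y)\le 3\varepsilon$. The key device is to feed into the hypothesis the one-variable coordinate functions $f_p:=d(\cdot,p)\in C(X)$. Using a finite $\varepsilon$-net $p_1,\dots,p_m$ of $X$, the triangle inequality gives the pointwise bound $d(u,v)\le\sum_{l=1}^m|f_{p_l}(u)-f_{p_l}(v)|+2\varepsilon$ for all $u,v\in X$ (choose a net point $p_l$ with $d(u,p_l)\le\varepsilon$ and estimate $|d(u,p_l)-d(v,p_l)|\ge d(u,v)-2\varepsilon$). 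Applying this with $u=T^jx$, $v=T^jy$, averaging over $F_n$, and taking $\limsup$, which is subadditive over the finite sum, gives $d_b(x,y)\le\sum_{l=1}^m d_{f_{p_l}}^{\prime}(x,y)+2\varepsilon\le\sum_{l=1}^m d_{f_{p_l}}(x,y)+2\varepsilon$, the last step being Cauchy--Schwarz ($L^1\le L^2$ on each normalized window, Definition~\ref{L1metric}). Now $f_{p_l}$-mean equicontinuity at $x$ supplies, for each of the finitely many $l$, some $\delta_l>0$ with $d(x,y)\le\delta_l\Rightarrow d_{f_{p_l}}(x,y)\le\varepsilon/m$; taking $\delta=\min_l\delta_l$ closes the estimate.

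The main obstacle is exactly this converse step. Mean equicontinuity concerns the two-variable average of $d(T^jx,T^jy)$, which does not factor through single-coordinate averages, so a naive Stone--Weierstrass approximation $\sum_l g_l(T^jx)h_l(T^jy)$ is useless because such products do not survive averaging. What makes the argument work is that the metric can be dominated, up to $2\varepsilon$ and uniformly in $(u,v)$, by a finite sum of differences of the one-variable Lipschitz functions $f_{p_l}=d(\cdot,p_l)$; this uniform net bound is the ``topological Halmos--von Neumann'' input, after which each summand is controlled by the hypothesis applied to that single function. I would verify carefully only the elementary net estimate $d(u,v)\le\sum_l|f_{p_l}(u)-f_{p_l}(v)|+2\varepsilon$ and the interchange of $\limsup$ with the finite sum; the remaining $\varepsilon$-bookkeeping is routine.
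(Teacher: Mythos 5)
Your forward direction is essentially the paper's: uniform continuity of $f$ combined with (uniform) mean equicontinuity of the system, read through the $\rho_f$ formulation. Your converse, which is the substantive half, is correct but takes a genuinely different route. The paper chooses a countable family $\{f_n\}$ with $|f_n|\le 1$ and dense linear span, forms the equivalent metric $\overline{d}(x,y)=\sum_n 2^{-n}|f_n(x)-f_n(y)|$, invokes the invariance of mean equicontinuity under a change to an equivalent metric, and then establishes mean equicontinuity for $\overline{d}$ by truncating the series and applying the hypothesis to $f_1,\dots,f_N$. You instead take a finite $\varepsilon$-net $p_1,\dots,p_m$ and the Lipschitz functions $f_{p_l}=d(\cdot,p_l)$, and prove the uniform domination $d(u,v)\le\sum_{l}|f_{p_l}(u)-f_{p_l}(v)|+2\varepsilon$, which is valid: the net point with $d(u,p_l)\le\varepsilon$ already contributes $|d(u,p_l)-d(v,p_l)|\ge d(u,v)-2\varepsilon$ and the remaining terms are nonnegative. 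Averaging, using subadditivity of $\limsup$ over the finite sum, and the Cauchy--Schwarz bound $d'_{f}\le d_{f}$ then reduces everything to the hypothesis applied to the finitely many $f_{p_l}$, yielding $d_b(x,y)\le 3\varepsilon$ directly. What your approach buys is the avoidance of the auxiliary metric $\overline{d}$ and of the (asserted but not proved in the paper) fact that mean equicontinuity is invariant under equivalent metrics; what the paper's approach buys is generality, since it works with any separating family with dense span rather than relying on the specific distance functions. Both arguments are sound.
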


\begin{proof}
$\Rightarrow$

Let $(X,T)$ be mean equicontinuous, $f\in C(X)$ and $\varepsilon>0.$ Since $X$
is compact, $f$ is uniformly continuous; thus there exists $\delta^{\prime}%
\in(0,\varepsilon)$ such that if $d(x,y)\leq\delta^{\prime}$ then $\left\vert
f(x)-f(y)\right\vert \leq\varepsilon.$ Using that $(X,T)$ is mean
equicontinuous there exists $\delta>0$ such that if $d(x,y)\leq\delta$ then
\[
\underline{D}(i\in\mathbb{G}:d(T^{i}x,T^{i}y)\leq\delta^{\prime})\geq
1-\delta^{\prime}.
\]
This implies that%
\[
\underline{D}(i\in\mathbb{G}:\left\vert f(x)-f(y)\right\vert \leq
\varepsilon)\geq1-\varepsilon.
\]
Hence $(X,T)$ is $f-$mean equicontinuous.

$\Leftarrow$

Let $\left\{  f_{n}\right\}  $ be a collection of functions such that
$\left\vert f_{n}\right\vert \leq1$ and the closure of its linear span is
$C(X).$

Such a collection separates points of $X$, and so
\[
{\overline{d}}(x,y):=\sum_{n=1}^{\infty}\frac{\left\vert f_{n}(x)-f_{n}%
(y)\right\vert }{2^{n}}%
\]
is a metric on $X.$ Let $\varepsilon>0$ and choose $N$ so that $\sum
_{n=N+1}^{\infty}2/2^{n}\leq\varepsilon/2.$ There exists $\delta>0$ such that
if $d(x,y)\leq\delta$ then
\[
\left\vert f_{n}(x)-f_{n}(y)\right\vert \leq\varepsilon/2
\]
for every $1\leq n\leq N.$ Thus if $d(x,y)\leq\delta$ then
\[
\sum_{n=1}^{\infty}\frac{\left\vert f_{n}(x)-f_{n}(y)\right\vert }{2^{n}}%
\leq\varepsilon/2+\varepsilon/2=\varepsilon.
\]
So, the identity map from $(X,d)$ to $(X,{\overline{d}})$ is a bijective
continuous map on a compact metric space and hence is a homeomorphism. We
conclude that $d$ and ${\overline{d}}$ are equivalent metrics.

It is not hard to see that mean equicontinuity is invariant under a change of
equivalent metrics. We will show $(X,T)$ is mean equicontinuous with respect
to ${\overline{d}}$.

Let $\varepsilon>0$ and choose $N$ so that $\sum_{n=N+1}^{\infty}2/2^{n}%
\leq\varepsilon/2.$ There exists $\delta>0$ such that if $d(x,y)\leq\delta$
then
\[
\lim\sup_{n\rightarrow\infty}\frac{1}{\nu(F_n) }\int
_{F_{n}}\left\vert f_{m}(T^{i}x)-f_{m}(T^{i}y)\right\vert d\nu(i)\leq
\varepsilon/2
\]
for every $1\leq m\leq N.$ Since $d$ and ${\overline{d}}$ are equivalent
metrics, there exists $\eta>0$ such that if ${\overline{d}}(x,y)\leq\eta$,
then $d(x,y)\leq\delta$, and so
\begin{align*}
&  \lim\sup_{n\rightarrow\infty}\frac{1}{\nu(F_n) }%
\int_{F_{n}}{\overline{d}}(T^{i}x,T^{i}y)d\nu(i)\\
&  \leq\varepsilon/2+\lim\sup_{n\rightarrow\infty}\frac{1}{\nu(F_n)}\int_{F_{n}}\sum_{m=1}^{N}\frac{\left\vert f_{m}%
(T^{i}x)-f_{m}(T^{i}y)\right\vert }{2^{m}}d\nu(i)\\
&  \leq\varepsilon.
\end{align*}
So, $(X,T)$ is mean equicontinuous with respect to ${\overline{d}}$.
\end{proof}

\subsection{Weakly almost periodic functions}

In \cite{ellis1959equicontinuity}, Ellis defined two topological notions of
almost periodic functions in $C(X)$.

We consider $C(X)$ as a Banach space with the $\ell^{\infty}$ norm. Two
classic topologies studied on Banach spaces are: the strong topology (given by
the norm) and the weak topology (the coarsest topology such that each element
in the dual space is continuous). Since $X$ is compact $f_{n}\rightarrow f$ in
the weak topology if and only if $f_{n}(x)\rightarrow f(x)$ for every $x\in
X.$

\begin{definition}
Let $(X,T)$ be a $\mathbb{G-}$TDS. We say $f\in C(X)$ is \textbf{almost
periodic} if $\left\{  U^{j}f:j\in\mathbb{G}\right\}  \subset C(X)$ is
precompact with respect to the strong topology. We say $f\in C(X)$ is
\textbf{weakly almost periodic }(WAP) if $\left\{  U^{j}f:j\in\mathbb{G}%
\right\}  \subset C(X)$ is precompact with respect to the weak topology. If
every $f\in C(X)$ is WAP then we say $(X,T)$ is\textbf{ weakly almost
periodic}.
\end{definition}

A system is equicontinuous if and only if every function is almost periodic. A
minimal system is equicontinuous if and only if it is weakly almost periodic
\cite{ellis1959equicontinuity}; nonetheless there are examples of transitive
weakly almost periodic systems that are not equicontinuous. We will show these
systems are always mean equicontinuous.

\begin{lemma}
Let $(X,T)$ be a $\mathbb{G-}$TDS. If $f$ is almost periodic then it is mean equicontinuous.
\end{lemma}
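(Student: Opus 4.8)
The plan is to show that almost periodicity of $f$ forces the orbit family $\{U^{j}f:j\in\mathbb{G}\}=\{f\circ T^{j}:j\in\mathbb{G}\}$ to be equicontinuous as a family of functions on $X$, and then to observe that $d_{f}$ is dominated by the pointwise supremum over this family. The crucial reduction is the elementary bound
\[
d_{f}(x,y)\leq\sup_{j\in\mathbb{G}}|f(T^{j}x)-f(T^{j}y)|,
\]
which holds because if $|f(T^{j}x)-f(T^{j}y)|^{2}\leq c^{2}$ uniformly in $j$, then every average of this integrand over $F_{n}$ is at most $c^{2}$, so the $\limsup$ of the square roots is at most $c$. (Since $f$ is continuous on the compact space $X$ it is bounded, so $d_{f}$ is finite-valued at every pair of points, not just at generic ones.)

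Thus it suffices to prove the following equicontinuity statement: for every $\varepsilon>0$ there is $\delta>0$ such that $d(x,y)\leq\delta$ implies $|f(T^{j}x)-f(T^{j}y)|\leq\varepsilon$ for all $j\in\mathbb{G}$. This is exactly where almost periodicity enters. Since $\{f\circ T^{j}:j\in\mathbb{G}\}$ is precompact in the $\ell^{\infty}$ topology it is totally bounded, so I would fix a finite $(\varepsilon/3)$-net $f\circ T^{j_{1}},\dots,f\circ T^{j_{k}}$ in sup norm. Each $f\circ T^{j_{i}}$ is continuous on the compact space $X$, hence uniformly continuous, so there is $\delta_{i}>0$ with $d(x,y)\leq\delta_{i}\Rightarrow|f(T^{j_{i}}x)-f(T^{j_{i}}y)|\leq\varepsilon/3$. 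I set $\delta=\min_{i}\delta_{i}$.

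To finish, I would run a standard three-$\varepsilon$ estimate: given any $j$, choose $i$ with $\|f\circ T^{j}-f\circ T^{j_{i}}\|_{\infty}\leq\varepsilon/3$; then for $d(x,y)\leq\delta$,
\[
|f(T^{j}x)-f(T^{j}y)|\leq\|f\circ T^{j}-f\circ T^{j_{i}}\|_{\infty}+|f(T^{j_{i}}x)-f(T^{j_{i}}y)|+\|f\circ T^{j_{i}}-f\circ T^{j}\|_{\infty}\leq\varepsilon.
\]
Combining with the reduction above gives $d_{f}(x,y)\leq\varepsilon$ whenever $d(x,y)\leq\delta$, so every point of $X$ is an $f$-mean equicontinuity point and $(X,T)$ is $f$-mean equicontinuous. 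There is no genuine obstacle here; the only point to get right is the observation that total boundedness of the orbit in sup norm, together with uniform continuity of the finitely many net elements $f\circ T^{j_{i}}$, upgrades to equicontinuity of the entire family (this is one half of Arzel\`a--Ascoli), and that $d_{f}$ never exceeds the uniform distance. Essentially the same argument goes through verbatim with $\rho_{f}$ in place of $d_{f}$, using Proposition~\ref{eq_metrics}.
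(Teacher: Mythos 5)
Your proof is correct and follows essentially the same route as the paper's: both extract a finite sup-norm net from the totally bounded orbit $\{U^{j}f\}$, use uniform continuity of the finitely many net elements, and run a three-$\varepsilon$ estimate; the only cosmetic difference is that you bound $d_{f}$ by the pointwise supremum over $j$ while the paper bounds the Besicovitch average $d_{f}'$ termwise and then invokes the equivalence of $d_{f}$ and $d_{f}'$.
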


\begin{proof}
Let $\varepsilon>0.$ There exists a finite set
$F\subset\mathbb{G}$ such that for every $j\in\mathbb{G}$ there exists
$i_{j}\in F$ such that $\left\vert U^{j}f-U^{i_{j}}f\right\vert _{\infty}%
\leq\varepsilon$ (where $U$ is the Koopman operator on $C(X)$)$.$ By uniform
continuity there exists $\delta>0$ such that if $d(x,y)\leq\delta$ then
$\left\vert U^{i}f(x)-U^{i}f(y)\right\vert \leq\varepsilon$ for all $i\in F.$
Thus, if $d(x,y)\leq\delta$ then for every $j$
\begin{align*}
&  \left\vert f(T^{j}x)-f(T^{j}y)\right\vert \\
&  \leq\left\vert f(T^{j}x)-f(T^{i_{j}}x)\right\vert +\left\vert f(T^{i_{j}%
}x)-f(T^{i_{j}}y)\right\vert +\left\vert f(T^{i_{j}}y)-f(T^{j}y)\right\vert \\
&  \leq3\varepsilon
\end{align*}
and thus
\begin{align*}
&  d_{f}^{\prime}(x,y)=\\
&  \lim\sup\frac{1}{\nu(F_n)}\int_{F_{n}}\left\vert
f(T^{j}x)-f(T^{j}y)\right\vert d\nu(j)\\
&  \leq3\varepsilon.
\end{align*}

By the equivalence of the metrics, $d_{f}(x,y)$ and $d_{f}^{\prime}(x,y)$, we
conclude that $f$ is mean equicontinuous.
\end{proof}


\begin{proposition}
Let $\mathbb{G}$ be a countable group and $(X,T)$ be a uniquely ergodic
$\mathbb{G}-$TDS. If $f$ is weakly almost periodic then it is mean equicontinuous.
\end{proposition}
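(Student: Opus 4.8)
The plan is to compute the pseudometric $d_f$ explicitly, show it is jointly continuous on $X\times X$, and observe that it vanishes on the diagonal. Since $|f|$ is bounded, I work with the $L^2$ version $d_f$ and exploit the identity $|a-b|^2=|a|^2+|b|^2-2\,\mathrm{Re}(a\overline{b})$. Writing $A_n h(x):=\frac{1}{\nu(F_n)}\int_{j\in F_n}h(T^jx)\,d\nu$, I expand
\[
\frac{1}{\nu(F_n)}\int_{j\in F_n}|f(T^jx)-f(T^jy)|^2\,d\nu = A_n(|f|^2)(x)+A_n(|f|^2)(y)-2\,\mathrm{Re}\,\frac{1}{\nu(F_n)}\int_{j\in F_n}f(T^jx)\overline{f(T^jy)}\,d\nu.
\]
The two ``diagonal'' terms are harmless: since $|f|^2\in C(X)$ and $(X,T)$ is uniquely ergodic, $A_n(|f|^2)\to\int|f|^2\,d\mu$ uniformly on $X$. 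The entire content of the proof is thus to control the cross term, and this is exactly where the WAP hypothesis enters.

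I would next recognize the cross term as a Cesàro average on the product system. Set $F(x,y):=f(x)\overline{f(y)}\in C(X\times X)$; then the cross term is the ergodic average of $F$ under the diagonal action $T\times T$. The key structural claim is that $F$ is weakly almost periodic for $(X\times X,T\times T)$: given any sequence $(j_n)$, weak precompactness of $\{U^jf\}$ in $C(X)$ lets me pass to a subsequence with $U^{j_n}f\to g$ pointwise and boundedly, whence $U^{j_n}f\otimes\overline{U^{j_n}f}\to g\otimes\overline{g}$ pointwise and boundedly, i.e.\ weakly in $C(X\times X)$. Hence the orbit of $F$ is relatively weakly compact.

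Then I invoke the mean ergodic theorem for relatively weakly compact orbits under an amenable (semi)group (the Eberlein--Jacobs version, applicable since $\mathbb{Z}^d$ is amenable with F\o lner sequence $F_n$): the averages of $F$ along $F_n$ converge weakly in $C(X\times X)$ to the projection $G$ of $F$ onto the $(T\times T)$-invariant vectors. The crucial point is that this limit is produced \emph{within} the Banach space $C(X\times X)$, so $G$ is itself continuous; weak convergence then gives pointwise convergence of the cross-term averages to $G(x,y)$ for every $(x,y)$. Combining with the uniform convergence of the diagonal terms, the $\limsup$ defining $d_f$ is in fact a genuine limit and
\[
d_f(x,y)^2 = 2\int|f|^2\,d\mu - 2\,\mathrm{Re}\,G(x,y),
\]
which is continuous on $X\times X$. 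On the diagonal unique ergodicity gives $G(x,x)=\lim_n A_n(|f|^2)(x)=\int|f|^2\,d\mu$, so $d_f(x,x)=0$.

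Finally, for each $x$ and $\varepsilon>0$, joint continuity of $d_f^2$ together with $d_f(x,x)=0$ yields $\delta>0$ with $d(x,y)\le\delta\Rightarrow d_f(x,y)\le\varepsilon$, which is precisely $f$-mean equicontinuity at $x$; compactness of $X$ in fact upgrades this to uniform mean equicontinuity. The main obstacle is Step~3: guaranteeing that the cross-term averages converge to a \emph{continuous} function $G$ rather than merely to an element of $L^\infty$ or a larger space. This is exactly what relative weak compactness of the orbit of $F$ secures, and it is the only step that genuinely uses weak almost periodicity (unique ergodicity being used separately, only to collapse the diagonal terms to the constant $\int|f|^2\,d\mu$).
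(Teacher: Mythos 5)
Your argument is correct, but it takes a genuinely different route from the paper's. The paper invokes the de Leeuw--Glicksberg decomposition $f=g+h$, with $g$ almost periodic in the norm topology and $h$ a flight function satisfying $\lim\frac{1}{\left\vert F_{n}\right\vert}\sum_{i\in F_{n}}\left\vert h(T^{i}x)\right\vert=0$ for every $x$ (this is where unique ergodicity enters there); then $d_{h}\equiv 0$, the preceding lemma gives mean equicontinuity of the almost periodic part $g$, and the triangle inequality for the pseudometric finishes. You instead expand $d_{f}(x,y)^{2}$ and apply the Eberlein mean ergodic theorem to $F=f\otimes\overline{f}$ on the product system --- having correctly verified, via Eberlein--\v{S}mulian and the fact that bounded pointwise convergence to a continuous limit is weak convergence in $C(K)$, that $F$ is WAP for $(X\times X,T\times T)$ --- to obtain a continuous invariant $G$ with $d_{f}(x,y)^{2}=2\int\left\vert f\right\vert^{2}d\mu-2\,\mathrm{Re}\,G(x,y)$. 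Both proofs rest on the same functional-analytic substrate (relatively weakly compact orbits in $C(K)$), but yours buys more: it shows the $\limsup$ defining $d_{f}$ is a genuine limit, that $d_{f}^{2}$ is jointly continuous, and hence that $f$-mean equicontinuity holds uniformly; it also isolates unique ergodicity as entering only through $G(x,x)=\int\left\vert f\right\vert^{2}d\mu$ (and since $\left\vert f\right\vert^{2}$ is itself WAP, even that could be routed through the mean ergodic theorem on $C(X)$). The paper's route is shorter and more modular, reusing its lemma that almost periodic functions are mean equicontinuous. If you write yours up, make explicit that the mean ergodic theorem along $F_{n}=[0,n]^{d}$ for the semigroup $\mathbb{Z}_{+}^{d}$ can be obtained by iterating the single-operator Ces\`aro mean ergodic theorem coordinate by coordinate, using that the relevant orbits and their closed convex hulls remain weakly compact.
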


\begin{proof}
Let $f\in C(X)$ be weakly almost periodic. As a consequence of the de
Leeuw-Glicksberg decomposition (see Theorem 1.51 in \cite{glasner2003ergodic})
we have that
\[
f=g+h,
\]
where $g$ is almost periodic and $\lim\frac{1}{\left\vert F_{n}\right\vert
}\sum_{i\in F_{n}}h(T^{i}x)=0$ for every $x.$ This implies $h$ is mean
equicontinuous. By the previous lemma $g$ is mean equicontinuous; we conclude
$f$ is mean equicontinuous.
\end{proof}

Using that transitive WAP systems are uniquely ergodic (Lemma 1.50 in
\cite{glasner2003ergodic}), the previous Proposition and Theorem \ref{efmean}
we obtain the following result.

\begin{corollary}
Let $\mathbb{G}$ be a countable group. If $(X,T)$ is a transitive WAP
$\mathbb{G}-$TDS then it is mean equicontinuous.
\end{corollary}

\section{Measure theoretic and topological results}

\label{hybrid}

\bigskip In this section we present hybrid results, that is, results that
reflect the topology as well as the measure theoretic structure of the sytem.

We remind the reader that given a metric space $X$ and a Borel probability
measure $\mu$ we denote the Borel sets with $\mathcal{B}_{X}$ and the Borel
sets with positive measure with $\mathcal{B}_{X}^{+}.$

We say $(X,\mu,T)$ is an \textbf{ergodic} $\mathbb{G-}$\textbf{TDS }if $(X,T)$
is a $\mathbb{G-}$TDS, $\mu$ a Borel probability measure and $(X,\mu,T)$ an
ergodic $\mathbb{G-}$MPS.

\subsection{$\mu-$Mean equicontinuity}

\begin{definition}
\label{mumeanequi}Let $(X,T)$ be a $\mathbb{G-}$TDS, $\mu$ an invariant Borel
probability measure$.$ We say $(X,T)$ is $\mu-$\textbf{mean equicontinuous} if
for every $\tau$ $>0$ there exists a compact set $M\subset X,$ with
$\mu(M)\geq1-\tau,$ such that for every $\varepsilon$ $>0$ there exists
$\delta>0$ such that whenever $x,y\in M$ and $d(x,y)\leq\delta$ then
\[
d_{b}(x,y)\leq\varepsilon.
\]

\end{definition}

\begin{remark}
\label{este} It is not hard to see that $d_{b}(\cdot,\cdot)$ is a Borel function.

Also, denoting the closed $\varepsilon-$balls of the pseudometric $d_{b}$ by
$B_{\varepsilon}^{b}(x),$ for every $\varepsilon>0$ and every $x\in X$ we have
that $B_{\varepsilon}^{b}(x)$ is $\mu-$measurable.
\end{remark}

Measure theoretic forms of sensitivity for $\mathbb{G-}$TDS have been studied
in \cite{Gilman1},\cite{Cadre2005375}, and \cite{mtequicontinuity}. In
particular in \cite{mtequicontinuity} it was shown that ergodic $\mathbb{G-}%
$TDS are either $\mu-$equicontinuous or $\mu-$sensitive. In \cite{weakeq}
$\mu-$mean sensitivity was introduced.

\begin{definition}
A $\mathbb{G-}$TDS $(X,T)$ is $\mu-$\textbf{mean sensitive} if there exists
$\varepsilon>0$ such that for every $A\in\mathcal{B}_{X}^{+}$ there exists
$x,y\in A$ such that%
\[
d_{b}(x,y)>\varepsilon.
\]

A $\mathbb{G-}$TDS $(X,T)$ is $\mu-$\textbf{mean expansive} if there exists
$\varepsilon>0$ such that $\mu\times\mu\left\{  (x,y):d_{b}(x,y)>\varepsilon
\right\}  =1.$
\end{definition}

\begin{theorem}
\label{strongsensitiv}\cite{weakeq}Let $(X,\mu,T)$ be an ergodic $\mathbb{G-}%
$TDS. The following are equivalent:

$1)$ $(X,T)$ is $\mu-$mean sensitive.

$2)$ $(X,T)$ is $\mu-$mean expansive$.$

$3)$ There exists $\varepsilon>0$ such that for almost every $x,$
$\mu(B_{\varepsilon}^{b}(x))=0.$

$4)(X,T)$ is not $\mu-$mean equicontinuous.
\end{theorem}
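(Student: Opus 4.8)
The plan is to prove the chain $1)\Leftrightarrow 2)\Leftrightarrow 3)$ exactly as in Theorem~\ref{strongsensitive}, and then to splice $4)$ into the equivalence by establishing $3)\Leftrightarrow 4)$. First I would record the $d_{b}$-analogue of Lemma~\ref{ecu}: the function $g(x):=\mu(B_{\varepsilon}^{b}(x))$ is constant for $\mu$-a.e. $x$ and equals $c(\varepsilon):=\mu\times\mu\{(x,y):d_{b}(x,y)\leq\varepsilon\}$. The measurability that this needs is precisely Remark~\ref{este}, and the invariance comes from the fact that $d_{b}(T^{j}x,T^{j}y)=d_{b}(x,y)$ (since $d$ is bounded on the compact space $X$, the Følner sequence $F_{n}=[0,n]^{d}$ absorbs the shift by $j$). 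Hence the set $R_{\varepsilon}=\{(x,y):d_{b}(x,y)\leq\varepsilon\}$ is $(T^{j}\times T^{j})$-invariant, and combining this with $(T^{j})_{*}\mu=\mu$ and Fubini gives $g\circ T^{j}=g$, so $g$ is a.e. constant by ergodicity. With this in hand, the implications $2)\Rightarrow 1)\Rightarrow 3)\Rightarrow 2)$ are verbatim the proof of Theorem~\ref{strongsensitive}, replacing $d_{f},\Sigma^{+}$ by $d_{b},\mathcal{B}_{X}^{+}$.

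It then remains to prove $3)\Leftrightarrow 4)$; equivalently, that $\mu$-mean equicontinuity holds if and only if NOT $3)$ holds, i.e. $c(\varepsilon)>0$ for every $\varepsilon>0$. For the easy direction ($\mu$-mean equicontinuous $\Rightarrow c(\varepsilon)>0$), I would fix $\varepsilon>0$, apply Definition~\ref{mumeanequi} with $\tau=1/2$ to obtain a compact $M$ with $\mu(M)\geq 1/2$ and a $\delta>0$ such that $x,y\in M$ and $d(x,y)\leq\delta$ imply $d_{b}(x,y)\leq\varepsilon$, cover $M$ by finitely many $d$-balls of radius $\delta/2$, and select one such ball $B$ with $\mu(B\cap M)>0$. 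Then $B\cap M\subseteq B_{\varepsilon}^{b}(x)$ for every $x\in B\cap M$, so $g>0$ on a set of positive measure, whence $c(\varepsilon)>0$ by a.e. constancy.

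The substantive direction is NOT $3)\Rightarrow\mu$-mean equicontinuous. The key reformulation is that Definition~\ref{mumeanequi} says exactly that, for every $\tau$, there is a compact $M$ with $\mu(M)\geq 1-\tau$ on which the projection $\pi_{b}:(X,d)\to(X/d_{b},d_{b})$ is (uniformly) continuous; that is, $\pi_{b}$ is Lusin-measurable. So I would first show that NOT $3)$ forces the pushforward $\hat{\mu}:=(\pi_{b})_{*}\mu$ to be concentrated on a separable subset of $X/d_{b}$. For each $\varepsilon$, choose by Zorn's lemma a maximal $\varepsilon$-separated set $D_{\varepsilon}$ of points whose $(\varepsilon/3)$-balls have positive $\hat{\mu}$-measure; these balls are pairwise disjoint, so $D_{\varepsilon}$ is countable, and maximality together with NOT $3)$ (which gives $\hat{\mu}$-a.e. point a positive-measure ball of every radius) shows that $\hat{\mu}$-a.e. point lies within $\varepsilon$ of $D_{\varepsilon}$. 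Taking $\varepsilon=1/n$, the set $\overline{\bigcup_{n}D_{1/n}}$ is separable and carries $\hat{\mu}$. Being Borel (Remark~\ref{este}) and now essentially separably valued, $\pi_{b}$ is Lusin-measurable, which for each $\tau$ yields a compact $M$ with $\pi_{b}|_{M}$ continuous, hence uniformly continuous, which is precisely $\mu$-mean equicontinuity.

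The main obstacle is this last direction, and within it the reduction to separability: one must check carefully that NOT $3)$ (positive measure of $d_{b}$-balls a.e.) genuinely yields essential separability of $\hat{\mu}$, and that Lusin's theorem can then be invoked with the a priori non-separable metric target $X/d_{b}$ cut down to its separable carrier. Everything else is either a transcription of Theorem~\ref{strongsensitive} or a routine compactness and covering argument.
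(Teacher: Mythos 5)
Your proof is correct and follows essentially the same route the paper takes for the analogous relative statement (Theorem~\ref{dichof}), whose proof the paper explicitly models on the cited proof of this theorem: the chain $1)\Leftrightarrow2)\Leftrightarrow3)$ transcribed from the MPS case, a compactness/covering argument to rule out equicontinuity in the sensitive case, and the separability-of-$(X/d_{b},d_{b})$ plus Lusin's theorem argument for the converse. Your Zorn's-lemma construction of a countable $\varepsilon$-net is just a direct re-proof of the paper's Lemma~\ref{separable}, so the two arguments coincide in substance.
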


In the next subsection we will obtain a similar result for $\mu-f-$mean sensitivity.

\subsection{$\mu-f-$Mean equicontinuity}

For the definition of $d_{f}$ and $B_{\varepsilon}^{f}$ see Definition
\ref{meansensitive} and Definition \ref{meansensitive2}.

\begin{definition}
Let $(X,T)$ be a $\mathbb{G-}$TDS, $\mu$ a Borel probability measure and $f\in
L^{2}(X,\mu).$ We say $(X,T)$ is $\mu-f-$\textbf{mean equicontinuous} if for
every $\tau$ $>0$ there exists a compact set $M\subset X,$ with $\mu
(M)\geq1-\tau,$ such that for every $\varepsilon$ $>0$ there exists $\delta>0$
such that whenever $x,y\in M$ and $d(x,y)\leq\delta$ then
\[
\rho_{f}(x,y)\leq\varepsilon.
\]
In this case we say $f$ is $\mu-$\textbf{mean equicontinuous}. We denote the
set of $\mu-$mean equicontinuous functions by $H_{me}$.
\end{definition}

The following fact is well known (for a proof see \cite{weakeq}).

\begin{lemma}
\label{separable}Let $\mathcal{(}Y,d_{Y})$ be a metric space$.$ Suppose that
there is no uncountable set $A\subset Y$ and $\varepsilon>0$ such that
$d_{Y}(x,y)>\varepsilon$ for every $x,y\in A$ with $x\neq y,$ then $(Y,d_{Y})$
is separable.
\end{lemma}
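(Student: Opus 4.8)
The plan is to build a countable dense set directly, using one maximal separated net at each scale $1/n$. The first thing to note is that the hypothesis says exactly this: for each fixed $\varepsilon>0$, any subset of $Y$ whose distinct points are pairwise more than $\varepsilon$ apart (an ``$\varepsilon$-separated'' set) must be countable. So the whole proof reduces to producing, for each $n$, a countable $(1/n)$-net and then taking the union over $n$.

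First I would fix $n\in\mathbb{N}$ and consider the family of all subsets $A\subset Y$ satisfying $d_{Y}(x,y)>1/n$ for all distinct $x,y\in A$, partially ordered by inclusion. The union of any chain of such sets is again such a set (any two points already lie in a common member of the chain), so Zorn's Lemma yields a maximal element $A_{n}$; by the hypothesis applied with $\varepsilon=1/n$, this $A_{n}$ is countable. Next I would exploit maximality to show $A_{n}$ is a $1/n$-net: for any $y\in Y$, if no $x\in A_{n}$ satisfied $d_{Y}(x,y)\leq 1/n$, then $A_{n}\cup\{y\}$ would still be $(1/n)$-separated, contradicting maximality. Hence every point of $Y$ lies within $1/n$ of some point of $A_{n}$.

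Finally I would set $D=\bigcup_{n\geq 1}A_{n}$, which is a countable union of countable sets and hence countable, and verify density: given $y\in Y$ and $\delta>0$, choosing $n$ with $1/n<\delta$ produces a point of $A_{n}\subset D$ within $\delta$ of $y$. Thus $D$ is a countable dense subset and $(Y,d_{Y})$ is separable. The only delicate points, and the closest thing to an obstacle, are the appeal to Zorn's Lemma to extract the maximal separated set and the matching of the strict inequality $d_{Y}>\varepsilon$ in the hypothesis with the non-strict net estimate $d_{Y}\leq 1/n$; both are routine once the separation scales are lined up as above.
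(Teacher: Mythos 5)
Your proof is correct: the appeal to Zorn's Lemma for a maximal $(1/n)$-separated set, the countability of each $A_{n}$ from the hypothesis, and the maximality argument giving the net property are all sound, and the strict/non-strict inequality issue you flag is handled properly. The paper itself gives no proof (it cites \cite{weakeq} for this well-known fact), and your argument via maximal separated nets at each scale is the standard one for this statement, so there is nothing to compare against beyond confirming correctness.
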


\begin{theorem}
\label{dichof}Let $(X,\mu,T)$ be an ergodic $\mathbb{G-}$TDS and $f\in
L^{2}(X,\mu)$. The following are equivalent:

$1)$ $(X,T)$ is $\mu-f-$mean sensitive.

$2)$ $(X,T)$ is $\mu-f-$mean expansive$.$

$3)$ There exists $\varepsilon>0$ such that for almost every $x,$
$\mu(B_{\varepsilon}^{f}(x))=0.$

$4)(X,T)$ is not $\mu-f-$mean equicontinuous.
\end{theorem}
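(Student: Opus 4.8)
The plan is to prove the four‑way equivalence by first recycling the purely measure‑theoretic chain $1)\Leftrightarrow 2)\Leftrightarrow 3)$ from Theorem~\ref{strongsensitive} and then grafting on condition $4)$ by two further implications. The three conditions $1),2),3)$ refer only to $d_f$ and never to the topology of $X$, so their equivalence transfers verbatim, with $\mathcal{B}_X^+$ playing the role of $\Sigma^+$: for $2)\Rightarrow 1)$ I would use that $A\times A$ has positive $\mu\times\mu$‑measure and pick a pair there; for $1)\Rightarrow 3)$ I would argue contrapositively, since if $\mu(B^f_{\varepsilon/2}(x))>0$ then $B^f_{\varepsilon/2}(x)$ is a positive‑measure set of $d_f$‑diameter at most $\varepsilon$, contradicting sensitivity; and for $3)\Rightarrow 2)$ I would apply Lemma~\ref{ecu} with ergodicity to pass from ``$\mu(B^f_\varepsilon(x))=0$ almost everywhere'' to ``$\mu\times\mu\{d_f\le\varepsilon\}=0$''. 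Since in the relevant (bounded) situation Proposition~\ref{eq_metrics} makes $d_f$ and $\rho_f$ equivalent, I will move freely between the two, so that the $\rho_f$ appearing in the definition of $\mu-f-$mean equicontinuity may be replaced by $d_f$.

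The first of the two extra implications is the easy one: mean equicontinuity precludes mean sensitivity. Given a putative sensitivity constant $\varepsilon_0$, I would take $\tau=1/2$ in the definition of $\mu-f-$mean equicontinuity to obtain a compact $M$ with $\mu(M)\ge 1/2$ and a $\delta>0$ for which $x,y\in M$ with $d(x,y)\le\delta$ forces $d_f(x,y)\le\varepsilon_0$. Covering the compact set $M$ by finitely many $d$‑balls of radius $\delta/2$, at least one meets $M$ in a set $A\in\mathcal{B}_X^+$, and every pair of points of $A$ then has $d_f$‑distance at most $\varepsilon_0$; hence $(X,T)$ is not $\mu-f-$mean sensitive. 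This gives $1)\Rightarrow 4)$.

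The substantive implication is that failure of $3)$ forces mean equicontinuity, which will give $4)\Rightarrow 3)$. Assuming $3)$ fails, the $0$–$1$ law of Lemma~\ref{ecu} yields, for every $\varepsilon>0$, a constant $c_\varepsilon>0$ with $\mu(B^f_\varepsilon(x))=c_\varepsilon$ for almost every $x$. Any two points that are $2\varepsilon$‑separated in $d_f$ have disjoint closed $d_f$‑balls of radius $\varepsilon$, so a probability space can contain at most $\lfloor 1/c_\varepsilon\rfloor$ of them; restricting to the full‑measure set $X_0$ of generic points for $f^2$ on which all the $c_{1/n}$ are attained, there is no infinite — a fortiori no uncountable — $\varepsilon$‑separated set for any $\varepsilon$, and Lemma~\ref{separable} then makes $(X_0,d_f)$ separable. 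With separability in hand I would invoke Lusin's theorem for the $\mu$‑measurable quotient map $X_0\to(X_0/d_f,d_f)$ into this separable metric space: for each $\tau>0$ it is continuous on a compact set $M$ with $\mu(M)\ge 1-\tau$, and continuity on a compact set is uniform continuity, which is exactly $\mu-f-$mean equicontinuity (with respect to $d_f$, hence $\rho_f$ by Proposition~\ref{eq_metrics}). Together with $1)\Leftrightarrow 3)$ this closes the cycle $1)\Rightarrow 4)\Rightarrow 3)\Rightarrow 1)$.

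The main obstacle is the construction in this last step, and specifically the measurability bookkeeping that licenses Lusin's theorem: one must verify that $d_f$ is jointly $\mu\times\mu$‑measurable (as recorded for $d_b$ in Remark~\ref{este}) and that the induced map into the separable quotient is $\mu$‑measurable, which requires choosing a countable $d_f$‑dense set of centers $x_k$ for which the sections $x\mapsto d_f(x,x_k)$ are measurable. This is routine but delicate, and is the part I would write out in full; once it is settled, the separability and Lusin steps are formal.
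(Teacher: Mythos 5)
Your proposal is correct and follows essentially the same route as the paper: the chain $1)\Leftrightarrow 2)\Leftrightarrow 3)$ is imported from Theorem~\ref{strongsensitive}, the implication into $4)$ is the same compact-covering argument (you run it from $1)$ rather than $2)$, which is immaterial once the equivalence is in hand), and $4)\Rightarrow 3)$ is proved contrapositively via Lemma~\ref{ecu}, Lemma~\ref{separable} and Lusin's theorem exactly as in the paper. Your extra attention to the joint measurability of $d_f$ and to the $d_f$/$\rho_f$ interchange is a reasonable sharpening of details the paper leaves implicit, not a different argument.
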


\begin{proof}
$1)\Leftrightarrow2)\Leftrightarrow3)$

Given by Theorem \ref{strongsensitive}.

The following proofs are similar to the proof of Theorem \ref{strongsensitiv}.

$2)\Rightarrow4)$

If $(X,T)$ is $\mu$-$f$-mean-expansive, then there is a set $Z\subset X\times
X$ s.t $\mu(Z)=1$ and for all $(x,y)\in Z$, we have $d_{f}(x,y)>\varepsilon$.
Suppose that $(X,T)$ is also $\mu$-$f$-mean equicontinuous. Then there is a
compact set $M\subseteq X$ with positive measure and $\delta>0$ such that if
$x,y\in M$ and $d(x,y)\leq\delta$, then $d_{f}(x,y)\leq\varepsilon$. Since $M$
is compact, it can be covered by finitely many $\delta/2$-balls. For at least
one of these balls $B$, we have $\mu(B\cap M)>0$ and so $((B\cap
M)\times(B\cap M))\cap Z\neq\emptyset.$ For any $(x,y)$ in this intersection,
we have $d_{f}(x,y)\leq\varepsilon$ and $d_{f}(x,y)>\varepsilon$, a contradiction.


$4)\Rightarrow3)$

Suppose $3)$ is not satisfied. By Lemma \ref{ecu} we have that for every
$n\in\mathbb{N}$ there exists a set of full measure $Y_{n}$
such that $\mu(B_{1/n}^{f}(x))>0$ for all $x\in Y_{n}.$ Let $Y:=\cap
_{n\in\mathbb{N}}Y_{n}$. If $(Y\diagup d_{f},d_{f})$ is not separable then by
Lemma \ref{separable} there exists $\varepsilon>0$ and an uncountable set $A$
such that for every $x,y\in A$ with $x\neq y$ we have that $B_{\varepsilon
}^{f}(x)\cap B_{\varepsilon}^{f}(y)=\emptyset.$ For any $1/n<\varepsilon$,
this is a contradiction to the fact that $\mu(B_{\varepsilon}^{f}(x))>0$ for
all $x\in Y_{n}$. Hence, $(Y\diagup d_{f},d_{f})$ is separable. Using Lusin's
Theorem we conclude $(X,T)$ is $\mu-f-$mean equicontinuous (this is done
exactly as in Proposition 19 in \cite{weakeq}).
\end{proof}


\begin{corollary}
\label{corhap}\bigskip Let $(X,\mu,T)$ be an ergodic $\mathbb{G-}$TDS. Then
$H_{ap}=H_{ms}^{c}=H_{me}.$
\end{corollary}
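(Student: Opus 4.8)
The plan is to obtain the triple identity by assembling two results that have already been proved in the measure-theoretic and hybrid settings, observing that an ergodic TDS $(X,\mu,T)$ is in particular an ergodic MPS, so both results apply directly. First I would record the equality $H_{ap}=H_{ms}^{c}$. This is precisely Theorem~\ref{main copy(1)}, which states $H_{ap}^{c}=H_{ms}$ for any ergodic MPS; complementing both sides gives $H_{ap}=H_{ms}^{c}$. At this step one only needs to confirm that the notion defining $H_{ms}$ (Definition~\ref{meansensitive}) is unchanged whether we regard the system as an abstract MPS or as a TDS equipped with its Borel measure $\mu$: both versions depend only on the pseudometric $d_{f}$ and on the measurable sets of positive measure, and here $\Sigma^{+}=\mathcal{B}_{X}^{+}$, so nothing is lost in the transfer.

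Next I would establish $H_{ms}^{c}=H_{me}$, or equivalently $H_{ms}=H_{me}^{c}$. This is an immediate consequence of the equivalence of conditions $(1)$ and $(4)$ in Theorem~\ref{dichof}: for $f\in L^{2}(X,\mu)$, the system $(X,T)$ is $\mu$-$f$-mean sensitive (that is, $f\in H_{ms}$) if and only if $(X,T)$ is \emph{not} $\mu$-$f$-mean equicontinuous (that is, $f\notin H_{me}$). Taking complements yields $H_{me}=H_{ms}^{c}$. It is worth noting that Theorem~\ref{dichof} already reconciles the fact that $H_{ms}$ is phrased via $d_{f}$ while $H_{me}$ is phrased via $\rho_{f}$: the equivalence $(1)\Leftrightarrow(4)$ bridges the two pseudometrics for general $f\in L^{2}$, so I need not invoke Proposition~\ref{eq_metrics} (which requires $f\in L^{\infty}$) separately.

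Chaining the two equalities then gives $H_{ap}=H_{ms}^{c}=H_{me}$, which is exactly the assertion. I do not expect a genuine obstacle in this argument, since the corollary is essentially a bookkeeping combination of Theorems~\ref{main copy(1)} and~\ref{dichof}; the only point demanding care is confirming the applicability of Theorem~\ref{main copy(1)} in the topological setting, i.e.\ that the hypotheses of an ergodic MPS are met by an ergodic TDS and that the object $H_{ms}$ is literally the same set in both theorems. Once that compatibility is checked, the proof is a one-line combination of the two displayed equalities.
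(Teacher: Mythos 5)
Your argument is correct and matches the paper's intent exactly: the paper states this corollary without proof immediately after Theorem~\ref{dichof}, precisely because it follows by combining $H_{ap}^{c}=H_{ms}$ from Theorem~\ref{main copy(1)} with the equivalence $(1)\Leftrightarrow(4)$ of Theorem~\ref{dichof}, as you do. Your added care about the compatibility of $H_{ms}$ across the MPS and TDS settings and about the $d_{f}$ versus $\rho_{f}$ phrasing is sound but not a departure from the paper's route.
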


\begin{definition}
We say a function $f\in L^{2}(X,\mu)$ is simple if it is a linear combination
of indicator functions (of measurable sets) $1_{B}.$
\end{definition}

\bigskip In the following result, the equivalence of $1)$, $4)$ and $5)$ was
proved in \cite{weakeq} for $\mathbb{Z}^{d}$ systems. That proof used symbolic
systems and does not work for continuous groups like $\mathbb{R}^{d}$.

\begin{theorem}
\label{meds}\textbf{\ }Let $(X,\mu,T)$ be an ergodic $\mathbb{G-}$TDS. The
following conditions are equivalent:

$1)$ $(X,T)$ is $\mu-$mean equicontinuous

$2)$ $(X,T)$ is $\mu-1_{B}-$mean equicontinuous for every $B\in\mathcal{B}%
_{X}.$

$3)$ $(X,T)$ is $\mu-f-$mean equicontinuous for every $f\in L^{2}.$

$4)$ $(X,\mu,T)$ has discrete spectrum.

$5)$ $(X,T)$ is not $\mu-$mean sensitive.
\end{theorem}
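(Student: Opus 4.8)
The plan is to prove the cycle of implications by combining the measure-theoretic machinery already established (especially Theorem~\ref{main copy(1)}, Corollary~\ref{propo}, and Theorem~\ref{dichof}) with the hybrid definitions. The logical skeleton is: establish the equivalences $2)\Leftrightarrow 3)\Leftrightarrow 4)$ using the earlier characterization of discrete spectrum via $H_{ap}$, then show $1)\Rightarrow 3)$ and $3)\Rightarrow 1)$ to fold the topological notion of $\mu$-mean equicontinuity into the picture, and finally dispatch $5)$ using the dichotomy of Theorem~\ref{strongsensitiv}.

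First I would handle $3)\Leftrightarrow 4)$, which is the heart of the matter. By Corollary~\ref{corhap}, for each fixed $f$ the system is $\mu$-$f$-mean equicontinuous if and only if $f\in H_{me}=H_{ms}^{c}=H_{ap}$. So condition $3)$ says $H_{ap}=L^{2}(X,\mu)$, and by Theorem~\ref{discrete} this is exactly discrete spectrum, giving $4)$. The equivalence $2)\Leftrightarrow 3)$ is then a routine density argument: since $H_{ap}$ is a closed subspace of $L^{2}$ and simple functions are dense, having every indicator $1_{B}\in H_{ap}$ forces $H_{ap}=L^{2}$, which is precisely the remark already made after Corollary~\ref{propo}. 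For $5)\Leftrightarrow 4)$, I would invoke Theorem~\ref{strongsensitiv}: $5)$ says $(X,T)$ is not $\mu$-mean sensitive, which by that theorem (parts $1)$ and $4)$) is equivalent to $(X,T)$ being $\mu$-mean equicontinuous, i.e. condition $1)$; so it suffices to tie $1)$ into the chain.

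The implications involving condition $1)$ are where the genuine work lies. For $1)\Rightarrow 3)$ I would argue that $\mu$-mean equicontinuity (controlling $d_{b}$ on large compact sets) forces $\mu$-$f$-mean equicontinuity for every $f\in L^{2}$; for bounded $f$ this follows from uniform continuity of $f$ together with the equivalence of $d_{f}$ and $\rho_{f}$ (Proposition~\ref{eq_metrics}), exactly mirroring the $\Rightarrow$ direction of Theorem~\ref{efmean} but localized to the compact set $M$, and the general $L^{2}$ case then follows by approximating $f$ by bounded functions and using that $H_{me}=H_{ap}$ is closed. The reverse direction $3)\Rightarrow 1)$ is the main obstacle: from control of each individual coordinate function $f_{n}$ in a dense sequence one must reconstruct control of the full metric $d_{b}$. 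I would adapt the $\Leftarrow$ direction of Theorem~\ref{efmean}, building the auxiliary metric ${\overline{d}}(x,y)=\sum_{n}2^{-n}|f_{n}(x)-f_{n}(y)|$ from a uniformly bounded point-separating family whose span is dense in $C(X)$; the tail is uniformly small and the finitely many leading terms are controlled by $\mu$-$f_{n}$-mean equicontinuity, but the subtlety is that each $f_{n}$ supplies its own compact set $M_{n}$, so I would intersect finitely many of these (using $\mu(M_{n})\geq 1-\tau/2^{n}$) to obtain a single compact set of measure at least $1-\tau$ on which all the relevant coordinates are simultaneously controlled, yielding $\mu$-mean equicontinuity with respect to ${\overline{d}}$ and hence, by equivalence of metrics, with respect to $d$.

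Assembling these, the final statement follows: $2)\Leftrightarrow 3)\Leftrightarrow 4)$ from the $L^{2}$-theory, $1)\Leftrightarrow 3)$ from the Fomin-style metric-reconstruction argument, and $1)\Leftrightarrow 5)$ from Theorem~\ref{strongsensitiv}. I expect the measure-theoretic equivalences to be essentially immediate given the earlier results, so the bulk of the writing—and the only place a careful argument is needed—is the passage between the single metric $d_{b}$ and the family $\{d_{f}\}_{f}$, precisely because the hybrid definition of $\mu$-mean equicontinuity allows the compact set $M$ and the modulus $\delta$ to depend on the function, and one must arrange uniformity across countably many coordinate functions while only sacrificing an arbitrarily small amount of measure.
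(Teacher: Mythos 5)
Your proposal is correct, and the measure-theoretic part ($2)\Leftrightarrow 3)\Leftrightarrow 4)$ via Corollary~\ref{corhap}, Theorem~\ref{discrete} and density of simple functions, plus $1)\Leftrightarrow 5)$ via Theorem~\ref{strongsensitiv}) coincides with the paper's. Where you genuinely diverge is in how condition $1)$ is tied in. The paper proves $4)\Rightarrow 1)$ abstractly: discrete spectrum gives an isomorphism to an isometry, isometries are $\mu$-mean equicontinuous, and $\mu$-mean equicontinuity is an isomorphism invariant (an external result, Proposition 28 of \cite{weakeq}); and it proves $1)\Rightarrow 2)$ directly by approximating $B$ and $B^{c}$ by compact sets from the inside and, crucially, intersecting with the set of generic points for $M_{1}$ so as to control the density of times $i$ at which $T^{i}x, T^{i}y$ actually lie in the good set. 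Your route replaces both steps: $3)\Rightarrow 1)$ by the Fomin-style reconstruction of the metric $\overline{d}=\sum_{n}2^{-n}|f_{n}(x)-f_{n}(y)|$ with compact sets $M_{n}$ of measure $\geq 1-\tau/2^{n}$ intersected into a single $M$, and $1)\Rightarrow 3)$ via continuous functions. Both of your steps work (the countable intersection is legitimate because the hybrid definition lets you fix $M$ before quantifying over $\varepsilon$), and your approach is more self-contained, avoiding the isometric model and the isomorphism-invariance citation. One correction of wording: in $1)\Rightarrow 3)$ you say the bounded case follows from ``uniform continuity of $f$,'' but a bounded $L^{2}$ function need not be continuous; what you actually need (and what makes your argument clean) is to take $f$ \emph{continuous}, where global uniform continuity applies and no localization to $M$ or generic-point argument is needed, and then pass to all of $L^{2}$ using density of $C(X)$ and closedness of $H_{me}=H_{ap}$ --- this neatly sidesteps the generic-point bookkeeping that the paper's indicator-function argument requires.
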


\begin{proof}
$2)\Leftrightarrow3)$

By Corollary \ref{corhap}, $H_{me}=H_{ap}$ and thus by\ Proposition~\ref{prop}
it is a closed subspace of $L^{2}(X,\mu)$. Now use the fact that simple
functions are dense in $L^{2}$.

$3)\Leftrightarrow4)$

Follows from Theorem \ref{discrete} and Corollary \ref{corhap}.

$1)\Leftrightarrow5)$

This is part of Theorem \ref{strongsensitiv}.

$4)\Rightarrow1)$

If $(X,\mu,T)$ has discrete spectrum, then it is measure-theoretically
isomorphic to an isometry (by Theorem \ref{discrete}). Isometries are
equicontinuous and therefore $\mu-$mean equicontinuous. Now use the fact that
$\mu-$mean equicontinuity is an isomorphism invariant (see Proposition 28 in
\cite{weakeq}).

$1)\Rightarrow2)$

Let $B\in\mathcal{B}_{X}$ and $1 > \varepsilon>0.$

There exists a compact set $M_{1}$ with $\mu(M_{1})\geq1-\varepsilon/3$ and
$\varepsilon^{\prime}>0$ such that if $x,y\in M_{1}$ and $d(x,y)\leq
\varepsilon^{\prime}$ then $1_{B}(x)=1_{B}(y)$ (namely, $M_{1}$ is the union
of two compact sets, one approximating $B$ and the other approximating $B^{c}%
$, both from the inside). We may assume that $\varepsilon^{\prime}
\le(1/3)\varepsilon$.

Since $(X,T)$ is $\mu-$mean equicontinuous there exists a compact set $M_{2}$
with $\mu(M_{2})\geq1-\varepsilon/3$ and $\delta>0$ such that if $x,y\in
M_{2}$ and $d(x,y)\leq\delta$ then
\[
\underline{D}(i\in\mathbb{G}:d(T^{i}x,T^{i}y)\leq\varepsilon^{\prime}%
)\geq1-\varepsilon^{\prime}.
\]

Let $M=M_{1}\cap M_{2}\cap\left\{  \text{generic points for }M_{1}\right\}  $
and $x,y\in M.$ We have that $\mu(M)\geq1-\varepsilon$. Since $\varepsilon<1$,
we have
\[
1_{B}T^{i}x=1_{B}T^{i}y\mbox{ if and only if }|1_{B}T^{i}x-1_{B}T^{i}%
y|\leq\varepsilon.
\]
So, if $d(x,y)\leq\delta$ then
\begin{align*}
&  \underline{D}(i\in\mathbb{G}:|1_{B}T^{i}x-1_{B}T^{i}y|\leq\varepsilon)\\
&  =\underline{D}(i\in\mathbb{G}:1_{B}T^{i}x=1_{B}T^{i}y)\\
&  \geq\underline{D}(i\in\mathbb{G}:d(T^{i}x,T^{i}y)\leq\varepsilon^{\prime
}\text{ and }T^{i}x,T^{i}y\in M_{1})\\
&  \geq1-\varepsilon^{\prime}-2\varepsilon/3\\
&  \geq1-\varepsilon.
\end{align*}

We conclude that $(X,T)$ is $\mu-1_{B}-$mean equicontinuous.
\end{proof}

\section{Relationship to results of quasicrystals\label{quasi}}

Of particular interest in the mathematical theory of quasicrystals and
aperiodic order is studying long range order that Delone sets may exhibit;
Delone sets are uniformly discrete and relatively dense subsets of
$\mathbb{R}^{d}$ (see \cite{baake2013aperiodic} and
\cite{kellendonk2015mathematics} for recent general expositions that contain
the main definitions in this section). A Delone set is crystalline if it is
periodic (in all $d$ directions). These are the most ordered Delone sets. We
say a Delone set is quasicrystalline if it has pure point diffraction spectrum
(this notion is defined using Fourier transforms) and uniform patch frequency
(configurations of patches of points appear with a uniform frequency).
Periodic Delone sets always have pure point diffraction spectrum and uniform
patch frequency. Initially it was not known if non-periodic subsets could have
pure point diffraction spectrum; nowadays there are many examples, perhaps the
best known is the Delone set associated to the Penrose tiling
\cite{robinson1996dynamical}. To every Delone set we can associate an
$\mathbb{R}^{d}-$topological\ dynamical system ( uniquely ergodic when the set
has uniform patch frequency), where the \ $\mathbb{R}^{d}-$action is defined
by the shifts in the $d$ directions. In \cite{baake2007characterization}
Baake, Lenz and Moody showed that a Delone set is crystalline if and only if
the associated $\mathbb{R}^{d}$- topological dynamical system is
equicontinuous (this means that the family of the shifts is equicontinuous).
This motivates the following question; can quasicrystals be characterized with
a weaker forms of equicontinuity? Combining the work of several papers it is
known that a Delone set is quasicrystalline if and only if the associated
dynamical system is uniquely ergodic and has discrete spectrum
\cite{dworkin1993spectral, lee2002pure, baake2004dynamical}. Using this and
Theorem \ref{meds} we obtain that a Delone set is quasicrystalline if and only
if the associated dynamical system is uniquely ergodic and $\mu$-mean
equicontinuous (with $\mu$ the unique invariant measure).

\bigskip

There are other characterizations that use the Besicovitch pseudometric to
characterize discrete spectrum. In \cite{quasicrystals} it is proven that a
(minimal) uniquely ergodic Delone dynamical system has discrete spectrum if
and only if it is mean almost periodic. That is, for every $x\in X$ and
$\varepsilon>0$ there exists a syndetic set $S\subset\mathbb{R}^{d}$, such
that $d_{b}(T^{i}x,x)\leq\varepsilon$ for every $i\in S.$ This result is
different from ours in the following senses; besides the clear distinction
that mean almost periodicity and mean equicontinuity are different concepts we
see that this result applies only for minimal systems and the condition mean
almost periodicity does not depend on the measure. Our characterization using
$\mu-$mean equicontinuity requires the property of mean equicontinuity only on
sets with large measure. Nonetheless there is a relationship. It is not hard
to see that every minimal mean equicontinuous $\mathbb{G-}$TDS is mean almost
periodic$.$ Let $\varepsilon>0$ and $\delta>0$ the number such that if
$d(x,y)\leq\delta$ then $d_{b}(x,y)\leq\varepsilon.$ Since $(X,T)$ is minimal
every point is almost periodic, thus there exists a syndetic set
$S\subset\mathbb{R}^{d}$, such that $d(T^{i}x,x)\leq\delta$ for every $i\in
S;$ this implies $d_{b}(T^{i}x,x)\leq\varepsilon$ for every $i\in S.$ On
\cite{lenz2016autocorrelation} there is a similar characterization that uses
Bohr almost periodicity and applies to general $\mathbb{G-}$TDS.

\bibliographystyle{aabbrv}
\bibliography{acompat,camel}

\newif\ifabfull\abfullfalse
\input apreambl
\newif\ifabfull\abfulltrue
\begin{thebibliography}{10}

\bibitem{AkinAuslander}
\abtype{0}{E.~Akin, J.~Auslander\abphrase{1}\abphrase{0}K.~Berg}.
\newblock When is a transitive map chaotic?
\newblock \abtype{2}{Ohio State Univ. Math. Res. Inst. Publ.}
  \abtype{3}{5}\abtype{4}{2}, 25--40 \abtype{5}{1996}.

\bibitem{auslander1959mean}
\abtype{0}{J.~Auslander}.
\newblock Mean- l-stable systems.
\newblock \abtype{2}{Illinois Journal of Mathematics}
  \abtype{3}{3}\abtype{4}{4}, 566--579 \abtype{5}{1959}.

\bibitem{auslanderyorke}
\abtype{0}{J.~Auslander\abphrase{0}J.~A. Yorke}.
\newblock Interval maps, factors of maps, and chaos.
\newblock \abtype{2}{Tohoku Mathematical Journal} \abtype{3}{32}\abtype{4}{2},
  177--188 \abtype{5}{1980}.

\bibitem{baake2013aperiodic}
\abtype{0}{M.~Baake\abphrase{0}U.~Grimm}.
\newblock \abtype{1}{Aperiodic Order}, \abphrase{8}~1.
\newblock Cambridge University Press \abtype{5}{2013}.

\bibitem{baake2004dynamical}
\abtype{0}{M.~Baake\abphrase{0}D.~Lenz}.
\newblock Dynamical systems on translation bounded measures: Pure point
  dynamical and diffraction spectra.
\newblock \abtype{2}{Ergodic Theory and Dynamical Systems}
  \abtype{3}{24}\abtype{4}{6}, 1867--1893 \abtype{5}{2004}.

\bibitem{baake2007characterization}
\abtype{0}{M.~Baake, D.~Lenz\abphrase{1}\abphrase{0}R.~V. Moody}.
\newblock Characterization of model sets by dynamical systems.
\newblock \abtype{2}{Ergodic Theory and Dynamical Systems}
  \abtype{3}{27}\abtype{4}{2}, 341--382 \abtype{5}{2007}.

\bibitem{Cadre2005375}
\abtype{0}{B.~Cadre\abphrase{0}P.~Jacob}.
\newblock On pairwise sensitivity.
\newblock \abtype{2}{Journal of Mathematical Analysis and Applications}
  \abtype{3}{309}\abtype{4}{1}, 375 -- 382 \abtype{5}{2005}.

\bibitem{downarowicz2016isomorphic}
\abtype{0}{T.~Downarowicz, E.~Glasner\abphrase{1}\abphrase{2}}.
\newblock Isomorphic extensions and applications.
\newblock \abtype{2}{Topological Methods in Nonlinear Analysis}
  \abtype{3}{48}\abtype{4}{1}, 321--338 \abtype{5}{2016}.

\bibitem{dworkin1993spectral}
\abtype{0}{S.~Dworkin}.
\newblock Spectral theory and x-ray diffraction.
\newblock \abtype{2}{Journal of mathematical physics}
  \abtype{3}{34}\abtype{4}{7}, 2965--2967 \abtype{5}{1993}.

\bibitem{ellis1959equicontinuity}
\abtype{0}{R.~Ellis}.
\newblock Equicontinuity and almost periodic functions.
\newblock \abtype{2}{Proceedings of the American Mathematical Society}
  \abtype{3}{10}\abtype{4}{4}, 637--643 \abtype{5}{1959}.

\bibitem{fominoriginal}
\abtype{0}{S.~Fomin}.
\newblock On dynamical systems with pure point spectrum (russian).
\newblock \abtype{2}{Dokl. Akad. Nauk SSSR} \abtype{3}{77}, 29--32
  \abtype{5}{1951}.

\bibitem{furstenberg2014recurrence}
\abtype{0}{H.~Furstenberg}.
\newblock \abtype{1}{Recurrence in ergodic theory and combinatorial number
  theory}.
\newblock Princeton University Press \abtype{5}{2014}.

\bibitem{furstenberg1982ergodic}
\abtype{0}{H.~Furstenberg, Y.~Katznelson, D.~Ornstein\abphrase{1}\abphrase{2}}.
\newblock The ergodic theoretical proof of szemer{\'e}di's theorem.
\newblock \abtype{2}{Bulletin of the AMS} \abtype{3}{7}\abtype{4}{3}
  \abtype{5}{1982}.

\bibitem{weakeq}
\abtype{0}{F.~Garc{\'\i}a-Ramos}.
\newblock Weak forms of topological and measure theoretical equicontinuity:
  relationships with discrete spectrum and sequence entropy.
\newblock \abtype{2}{Ergodic Theory and Dynamical Systems}
  \abtype{3}{37}\abtype{4}{4}, 1211--1237 \abtype{5}{2017}.

\bibitem{Gilman1}
\abtype{0}{R.~H. Gilman}.
\newblock {Classes of linear automata.}
\newblock \abtype{2}{Ergodic Theory Dyn. Syst.} \abtype{3}{7}, 105--118
  \abtype{5}{1987}.

\bibitem{glasner2003ergodic}
\abtype{0}{E.~Glasner}.
\newblock \abtype{1}{Ergodic theory via joinings}.
\newblock \abphrase{11} 101. American Mathematical Soc. \abtype{5}{2003}.

\bibitem{quasicrystals}
\abtype{0}{J.-B. Gouere}.
\newblock Quasicrystals and almost periodicity.
\newblock \abtype{2}{Communications in Mathematical Physics}
  \abtype{3}{255}\abtype{4}{3}, 655--681 \abtype{5}{2005}.

\bibitem{mtequicontinuity}
\abtype{0}{W.~Huang, P.~Lu\abphrase{1}\abphrase{0}X.~Ye}.
\newblock Measure-theoretical sensitivity and equicontinuity.
\newblock \abtype{2}{Israel Journal of Mathematics} \abtype{3}{183}, 233--283
  \abtype{5}{2011}.

\bibitem{kellendonk2015mathematics}
\abtype{0}{J.~Kellendonk, D.~Lenz\abphrase{1}\abphrase{0}J.~Savinien}.
\newblock \abtype{1}{Mathematics of aperiodic order}, \abphrase{8} 309.
\newblock Springer \abtype{5}{2015}.

\bibitem{lee2002pure}
\abtype{0}{J.-Y. Lee, R.~V. Moody\abphrase{1}\abphrase{0}B.~Solomyak}.
\newblock Pure point dynamical and diffraction spectra.
\newblock \abphrase{7}\abtype{1}{Annales Henri Poincar{\'e}}, \abphrase{8}~3,
  \abphrase{13} 1003--1018. Springer \abtype{5}{2002}.

\bibitem{lenz2016autocorrelation}
\abtype{0}{D.~Lenz}.
\newblock An autocorrelation and discrete spectrum for dynamical systems on
  metric spaces.
\newblock \abtype{2}{arXiv preprint arXiv:1608.05636} \abtype{5}{2016}.

\bibitem{li2013mean}
\abtype{0}{J.~LI, S.~TU\abphrase{1}\abphrase{0}X.~YE}.
\newblock Mean equicontinuity and mean sensitivity.
\newblock \abtype{2}{Ergodic Theory and Dynamical Systems}
  \abtype{3}{35}\abtype{4}{8}, 2587--2612 \abtype{5}{2015}.

\bibitem{lindenstrauss2001pointwise}
\abtype{0}{E.~Lindenstrauss}.
\newblock Pointwise theorems for amenable groups.
\newblock \abtype{2}{Inventiones mathematicae} \abtype{3}{146}\abtype{4}{2},
  259--295 \abtype{5}{2001}.

\bibitem{oxtoby1952}
\abtype{0}{J.~C. Oxtoby}.
\newblock Ergodic sets.
\newblock \abtype{2}{Bulletin of the American Mathematical Society}
  \abtype{3}{58}\abtype{4}{2}, 116--136 \abtype{5}{03 1952}.

\bibitem{robinson1996dynamical}
\abtype{0}{E.~Robinson~Jr}.
\newblock The dynamical properties of penrose tilings.
\newblock \abtype{2}{Transactions of the American Mathematical Society}
  \abtype{3}{348}\abtype{4}{11}, 4447--4464 \abtype{5}{1996}.

\bibitem{scarpellini1982stability}
\abtype{0}{B.~Scarpellini}.
\newblock Stability properties of flows with pure point spectrum.
\newblock \abtype{2}{Journal of the London Mathematical Society}
  \abtype{3}{2}\abtype{4}{3}, 451--464 \abtype{5}{1982}.

\bibitem{walters2000introduction}
\abtype{0}{P.~Walters}.
\newblock \abtype{1}{An Introduction to Ergodic Theory}.
\newblock Graduate Texts in Mathematics. Springer-Verlag \abtype{5}{2000}.

\end{thebibliography}

\end{document}